\newcommand{\remove}[1]{}
\newcommand{\bitem}{\begin{itemize}}
\newcommand{\eitem}{\end{itemize}}
\newcommand{\benum}{\begin{enumerate}}
\newcommand{\eenum}{\end{enumerate}}
\newcommand{\beq}{\begin{equation}}
\newcommand{\eeq}{\end{equation}}
\newtheorem{theorem}{Theorem}[section]
\newtheorem{proposition}[theorem]{Proposition}
\theoremstyle{definition}
\newtheorem{definition}[theorem]{Definition}
\newcommand{\ip}[2]{\langle#1,#2\rangle}
\theoremstyle{remark}
\newtheorem{remark}{Remark}[section]
\numberwithin{equation}{section}
\newcommand\nc\newcommand
\nc\bfa{{\boldsymbol a}}\nc\bfA{{\bf A}}\nc\cA{{\mathcal A}}
\nc\bfb{{\boldsymbol b}}\nc\bfB{{\bf B}}\nc\cB{{\mathcal B}}
\nc\bfc{{\boldsymbol c}}\nc\bfC{{\bf C}}\nc\cC{{\mathcal C}}\nc\sC{{\mathscr C}}
\nc\bfd{{\boldsymbol d}}\nc\bfD{{\bf D}}\nc\cD{{\mathcal D}}
\nc\bfe{{\boldsymbol e}}\nc\bfE{{\bf E}}\nc\cE{{\mathcal E}}
\nc\bff{{\boldsymbol f}}\nc\bfF{{\bf F}}\nc\cF{{\mathcal F}}
\nc\bfg{{\boldsymbol g}}\nc\bfG{{\bf G}}\nc\cG{{\mathcal G}}
\nc\bfh{{\boldsymbol h}}\nc\bfH{{\bf H}}\nc\cH{{\mathcal H}}
\nc\bfi{{\boldsymbol i}}\nc\bfI{{\bf I}}\nc\cI{{\mathcal I}}\nc\sI{{\mathscr I}}
\nc\bfj{{\boldsymbol j}}\nc\bfJ{{\bf J}}\nc\cJ{{\mathcal J}}
\nc\bfk{{\boldsymbol k}}\nc\bfK{{\bf K}}\nc\cK{{\mathcal K}}
\nc\bfl{{\boldsymbol l}}\nc\bfL{{\bf L}}\nc\cL{{\mathcal L}}
\nc\bfm{{\boldsymbol m}}\nc\bfM{{\bf M}}\nc\cM{{\mathcal M}}
\nc\bfn{{\boldsymbol n}}\nc\bfN{{\bf N}}\nc\cN{{\mathcal N}}
\nc\bfo{{\boldsymbol o}}\nc\bfO{{\bf O}}\nc\cO{{\mathcal O}}
\nc\bfp{{\boldsymbol p}}\nc\bfP{{\bf P}}\nc\cP{{\mathcal P}}\nc\eP{{\EuScript P}}
\nc\bfq{{\boldsymbol q}}\nc\bfQ{{\bf Q}}\nc\cQ{{\mathcal Q}}
\nc\bfr{{\boldsymbol r}}\nc\bfR{{\bf R}}\nc\cR{{\mathcal R}}
\nc\bfs{{\boldsymbol s}}\nc\bfS{{\bf S}}\nc\cS{{\mathcal S}}
\nc\bft{{\boldsymbol t}}\nc\bfT{{\bf T}}\nc\cT{{\mathcal T}}
\nc\bfu{{\boldsymbol u}}\nc\bfU{{\bf U}}\nc\cU{{\mathcal U}}
\nc\bfv{{\boldsymbol v}}\nc\bfV{{\bf V}}\nc\cV{{\mathcal V}}
\nc\bfw{{\boldsymbol w}}\nc\bfW{{\bf W}}\nc\cW{{\mathcal W}}
\nc\bfx{{\boldsymbol x}}\nc\bfX{{\bf X}}\nc\cX{{\mathcal X}}
\nc\bfy{{\boldsymbol y}}\nc\bfY{{\bf Y}}\nc\cY{{\mathcal Y}}
\nc\bfz{{\boldsymbol z}}\nc\bfZ{{\bf Z}}\nc\cZ{{\mathcal Z}}
\newcommand\reals{{\mathbb R}}
\begin{document}

\author[A. Barg]{Alexander Barg$^\MakeLowercase{a}$}\thanks{$^a$
Department of ECE and Institute for Systems Research, University
of Maryland, College Park, MD 20742, and IITP, Russian Academy of
Sciences, Moscow, Russia. Email: abarg@umd.edu. Research supported
in part by NSF grants CCF1217894, DMS1101697, and NSA
98230-12-1-0260;}
\author[A. Glazyrin]{Alexey Glazyrin$^\MakeLowercase{b}$}\thanks{$^b$ Department of Mathematics, The University
of Texas, Brownsville, TX 78520, Email: Alexey.Glazyrin@utb.edu. Research supported in part by NSF grants DMS1101688, DMS1400876}
\author[K. A. Okoudjou]{Kasso A.~Okoudjou$^\MakeLowercase{c}$}\thanks{$^c$ Department of Mathematics, University
of Maryland, College Park, MD 20742, Email: kasso@math.umd.edu.  Research supported in part by a RASA from the Graduate School of UMCP, and by a grant from the Simons Foundation ($\# 319197$ to Kasso Okoudjou).}
\author[W.-H. Yu]{Wei-Hsuan Yu$^\MakeLowercase{d}$}\thanks{$^{d}$ Department of Mathematics, University
of Maryland and Inst. for Systems Research, College Park, MD
20742, Email: mathyu@math.umd.edu.  Research supported in part by
NSF grants CCF1217894,  DMS1101697.}

\title[Finite two-distance tight frames]{Finite two-distance tight frames}
\date{\today}
\subjclass[2000]{Primary 42C15; Secondary 15B48}
\keywords{Spherical two-distance sets, finite tight frames, strongly regular graphs, spherical 2-designs, spherical designs of 
harmonic index 2}

\begin{abstract}
A finite collection of unit vectors $S\subset\reals^n$ is called a
spherical two-distance set if there are two numbers $a$ and $b$
such that the inner products of distinct vectors from $S$ are
either $a$ or $b$. We prove that if $a\ne -b,$ then a two-distance set that forms
a tight frame for $\reals^n$ is a spherical embedding of a 
strongly regular graph. We also describe all two-distance tight frames obtained from a given graph.
Together with an
earlier work by S. Waldron on the equiangular case ({\em Linear Alg. Appl.}, vol. 41, pp. 2228-2242, 2009)
this completely characterizes two-distance tight frames. 
As an intermediate result, we obtain a classification of all two-distance 2-designs.\end{abstract}

\maketitle

\section{Introduction}

A finite collection of unit vectors $S\subset\reals^n$ is
called a spherical two-distance set if there are two numbers $a$
and $b$ such that the inner products of distinct vectors from $S$
are either $a$ or $b$. If in addition $a=-b,$ then $S$ defines a
set of equiangular lines through the origin in $\reals^n$.  Equiangular
lines form a classical subject in discrete geometry following
foundational papers of Van Lint, Seidel, and Lemmens
\cite{lint66,lem73}. Equiangular line sets are closely related to strongly
regular graphs and two-graphs \cite{del77b,Godsil2001} which form the main source
of their constructions. Another group of results is concerned
with bounding the maximum size $g(n)$ of spherical
two-distance sets in $n$ dimensions.
    We refer to \cite{barg13,barg14} for the latest results on upper bounds on $g(n)$ as well as an overview of the
    relevant literature.

%\subsection{Finite unit-norm tight frames (FUNTFs)}
A finite collection of vectors $S=\{x_1,\dots,x_N\}\subset \reals^n$ is
called a \textit{finite frame} for the Euclidean space $\reals^n$ if
there are constants $ 0 < A \leq B < \infty$ such that for all $x
\in \reals^n$
  \begin{equation}\label{eq:frame-bounds}
A||x||^2 \leq \sum_{i=1}^N \langle x,x_i \rangle^2 \leq B
||x||^2.
\end{equation}
If $A=B,$ then $S$ is called an $A$-tight frame, in which case
  \begin{equation}\label{eq:A}
    A=\frac1n {\sum_{i} \|x_i\|^2}.
  \end{equation}
  It is trivially  seen that a finite collection of vectors $S=\{x_i: i=1,\dots N\}\subset \reals^n$ is an $A$-tight frame if and only if for any $x
\in \reals^n$, 

\begin{equation}\label{tight-frame-char}
A x = \sum_{i=1}^N \langle x,x_i \rangle x_i.
\end{equation}
  
 If in addition
$\|x_i\|=1$ for all $i \in I$, then $S$  is a finite unit-norm tight
frame or FUNTF. If at the same time $S$ is a spherical
two-distance set, we call it a {\em two-distance tight frame}. In
particular, if the two inner products in $S$ satisfy the condition
$a=-b,$ then it is an {\em equiangular tight frame} or ETF. All frames in this paper
will be assumed unit-norm.

The Gram matrix $G$ of $S$ is defined by $G_{ij}=\langle
x_i,x_j\rangle, 1\le i,j\le N,$ where $N=|S|.$ If $S$ is a FUNTF
for $\reals^n$, then it is straightforward to show  \cite{hklw} that $G$ has one
nonzero eigenvalue $\lambda=N/n$ of multiplicity $n$ and 
eigenvalue 0 of multiplicity $N-n$.

Frames have been used in signal processing and have a large number
of applications in sampling theory, wavelet theory, data
transmission, and filter banks \cite{ck12, koche1}.  The
study of ETFs was initiated by Strohmer and Heath \cite{stro03}
and Holmes and Paulsen \cite{hol04}. In particular, \cite{hol04}
shows that equiangular tight frames give error correcting codes
that are robust against two erasures. Bodmann et al. \cite{bod07}
show that ETFs are useful for signal reconstruction when all the
phase information is lost. Sustik et al. \cite{sus07} derived
necessary conditions on the existence of ETFs as well as bounds on
their maximum cardinality.

Benedetto and Fickus \cite{ben03} introduced a useful parameter of
the frame, called the {\em frame potential}. For our purposes it
suffices to define it as $FP(S)=\sum_{i,j=1}^{N}\langle
x_i,x_j\rangle^2.$ For a two-distance frame we obtain
  \begin{equation}\label{eq:fp1}
  \sum_{i,j=1}^N \langle x_i,x_j\rangle^2=N+2\nu_a a^2+(N(N-1)-2\nu_a)b^2,
  \end{equation}
where $\nu_a=|\{(i, j):  i<j: \ip{x_i}{x_j}=a\}|.$ 
\remove{Moreover, if
$N>2n+1,$ Theorem \ref{thm:LRS} implies that $b=(ka-1)/(k-1),$
where $k$ is an integer between $2$ and $(1/2)(1+\sqrt{2n}).$ This
gives some information for a lower bound on $FP(S)$, but
fortunately, a more general and concrete result is known from
\cite{ben03}.}
\begin{theorem}{\cite[Theorem.6.2]{ben03}}\label{thm:BF}
   Let $N>n.$  If $S=\{x_i: 1\leq i \leq N\}$ is any set of unit-norm vectors, then
  \begin{equation}\label{eq:fp}
    FP(S)\ge \frac {N^2} n
  \end{equation}
  with equality if and only if $S$ is a tight frame.
\end{theorem}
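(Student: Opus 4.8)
The plan is to express the frame potential through the \emph{frame operator} of $S$ and then reduce \eqref{eq:fp} to the Cauchy--Schwarz inequality applied to its eigenvalues. Set $T=\sum_{i=1}^N x_ix_i^\top$, the $n\times n$ symmetric positive semidefinite matrix implementing the map $x\mapsto\sum_i\langle x,x_i\rangle x_i$; by \eqref{tight-frame-char}, $S$ is an $A$-tight frame if and only if $T=A\,\id[n]$. The first step is the identity
\[
FP(S)=\sum_{i,j=1}^N\langle x_i,x_j\rangle^2=\sum_{i,j=1}^N\mathrm{tr}\big(x_ix_i^\top x_jx_j^\top\big)=\mathrm{tr}(T^2),
\]
which follows by writing $\langle x_i,x_j\rangle^2=\mathrm{tr}(x_ix_i^\top x_jx_j^\top)$ and using linearity of the trace. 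Because each $x_i$ is a unit vector, $\mathrm{tr}(T)=\sum_i\|x_i\|^2=N$.

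Next, let $\lambda_1,\dots,\lambda_n\ge 0$ denote the eigenvalues of $T$ listed with multiplicity. Then $FP(S)=\mathrm{tr}(T^2)=\sum_{k=1}^n\lambda_k^2$ while $\sum_{k=1}^n\lambda_k=\mathrm{tr}(T)=N$, so Cauchy--Schwarz applied to $(\lambda_1,\dots,\lambda_n)$ and $(1,\dots,1)$ gives
\[
N^2=\Big(\sum_{k=1}^n\lambda_k\Big)^2\le n\sum_{k=1}^n\lambda_k^2=n\cdot FP(S),
\]
which is exactly \eqref{eq:fp}.

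For the equality statement, recall that equality in Cauchy--Schwarz forces $\lambda_1=\dots=\lambda_n$; since these sum to $N$ we obtain $\lambda_k=N/n$ for every $k$, i.e.\ $T=(N/n)\,\id[n]$, and by \eqref{tight-frame-char} this is precisely the assertion that $S$ is a tight frame, necessarily with constant $A=N/n$ in agreement with \eqref{eq:A}. Conversely, if $S$ is tight then $T=A\,\id[n]$ with $A=N/n$ by \eqref{eq:A}, hence $FP(S)=\mathrm{tr}(T^2)=nA^2=N^2/n$. I do not anticipate a serious obstacle here: the argument is routine once $FP(S)$ is recognized as $\mathrm{tr}(T^2)$, and the only point requiring a little care is carrying the equality analysis through in both directions and observing that the hypothesis $N>n$ (which is not needed for the inequality itself) simply guarantees that the extremal configurations are genuinely overcomplete tight frames.
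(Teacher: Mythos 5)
Your argument is correct. Note that the paper itself does not prove this statement at all: it is quoted as Theorem 6.2 of Benedetto--Fickus \cite{ben03}, so there is no in-paper proof to compare against. Your route --- writing $FP(S)=\mathrm{tr}(T^2)$ for the frame operator $T=\sum_i x_ix_i^\top$, noting $\mathrm{tr}(T)=N$ for unit-norm vectors, and applying Cauchy--Schwarz to the eigenvalues --- is the standard streamlined proof of the frame-potential bound, and every step checks out, including the equality analysis ($\lambda_1=\dots=\lambda_n=N/n$ iff $T=(N/n)I$ iff $S$ is tight, via \eqref{tight-frame-char} and \eqref{eq:A}). This is considerably more economical than the original derivation in \cite{ben03}, which embeds the result in a ``points-and-forces'' minimization framework and a more general fundamental inequality covering the case $N\le n$ as well. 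Your closing remark about the hypothesis $N>n$ is also accurate: the inequality and the equality characterization both survive without it (for $N<n$ equality is simply unattainable since $T$ is rank-deficient, and for $N=n$ the extremizers are orthonormal bases), so the hypothesis only serves to place the statement in the overcomplete regime relevant to the paper.
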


A finite collection of unit vectors $S=\{x_i: i=1,\dots,N\}$ in $\reals^n$ is called a 
{\em spherical $2$-design} \cite{del77b} if
   \begin{equation}
   \label{eq:design}
   \sum_{i=1}^N x_i=0, \quad \sum_{i,j=1}^N\langle x_i,x_j\rangle^2=\frac{N^2}n.
   \end{equation} 
In other words, a spherical 2-design is a FUNTF with the center of mass at the origin.   

\begin{remark} In \cite{ban14} spherical sets that satisfy only the tight frame condition (the second condition in
\eqref{eq:design}) are called spherical designs of harmonic index 2. In the sequel we will refer to such spherical designs as shifted $2$-designs. 
\end{remark}

To state our main result we need several definitions. A regular graph of degree $k$ on $v$ vertices is called {\em strongly
regular}  if every two adjacent vertices have $\lambda$ common
neighbors and every two non-adjacent vertices have $\mu$ common
neighbors. Below we use the notation $\text{SRG}(v,k,\lambda,
\mu)$ to denote such a graph. Note that the complement of a strongly regular graph $\text{SRG}(v,k,\lambda,
\mu)$,  is also strongly regular, namely $\text{SRG}(v,v-k-1, v-2k+\mu-2, v-2k+\lambda)$. The theory
of strongly regular graphs is presented, for instance, in \cite{Godsil2001,Brouwer12}. 
Below we use a classical construction of spherical embeddings of strongly regular graphs introduced by Delsarte, Goethals, and Seidel \cite[Example 9.1]{del77b}; see also \cite{neu81,ban05}. Roughly speaking,
a spherical embedding of $\Gamma=\text{SRG}(v,k,\lambda,\mu)$ is obtained by associating a basis of $\reals^v$ with the vertices
of $\Gamma$ and projecting these vectors on an eigenspace of the adjacency matrix of $\Gamma$. A more detailed 
description is 
given in Sect.~\ref{sect:proof} after we develop all the necessary pieces of notation.

   In this paper we characterize two-distance FUNTFs by linking them to spherical 2-designs
and strongly regular graphs. Our main result is as follows:
\begin{theorem}\label{thm:main} Let $S=\{x_i: i\in I\}$ be a non-equiangular 
two-distance FUNTF in $\reals^n.$ Then $S$ forms a spherical two-distance
$2$-design or a shifted $2$-design. In either case $S$ can be obtained as a spherical embedding of 
a strongly regular graph. Under spherical embedding, every strongly regular graph gives rise to three different two-distance $2$-designs and therefore, to six different 
two-distance tight frames, two of which are regular simplices.
\end{theorem}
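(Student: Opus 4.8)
The plan is to translate the two-distance FUNTF condition into linear-algebraic conditions on its Gram matrix $G$, then recognize $G$ as built from the adjacency matrix of a strongly regular graph. First I would normalize: since $S$ is unit-norm, $G$ has $1$'s on the diagonal and off-diagonal entries in $\{a,b\}$. Encode the ``$a$-pattern'' by the $0/1$ adjacency matrix $M$ of a graph $\Gamma$ on the index set $I$, so that $G = I + aM + b(J - I - M) = (1-b)I + (a-b)M + bJ$, where $J$ is the all-ones matrix. Because $S$ is a FUNTF for $\reals^n$, the fact quoted from \cite{hklw} tells us $G$ has exactly two eigenvalues: $\lambda = N/n$ with multiplicity $n$, and $0$ with multiplicity $N-n$; equivalently $G(G - \tfrac Nn I) = 0$, i.e. $G$ satisfies a quadratic minimal polynomial. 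Substituting the expression for $G$ in terms of $M$ and $J$, and using $a\ne -b$ (which guarantees $a-b\ne 0$ so $M$ is a genuine nontrivial linear combination — actually the role of $a\ne -b$ is subtler and I return to it), I would deduce that $M$ itself satisfies a quadratic polynomial on the orthogonal complement of the all-ones vector. Care is needed because $J$ and $M$ need not commute in general — but the key point is that $G$ being a polynomial of degree $\le 1$ in $M$ plus a multiple of $J$, together with $G$ having only two eigenvalues, forces $M$ to have few eigenvalues.

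The crux is showing $\Gamma$ is regular and then strongly regular. Regularity: apply $G$ to the all-ones vector $\mathbf 1$. The tight-frame relation \eqref{tight-frame-char}, or directly the eigenstructure of $G$, combined with the structure $G = (1-b)I + (a-b)M + bJ$, shows that $M\mathbf 1$ lies in the span of $\mathbf 1$ and $G\mathbf 1$; pushing this through (using that the only eigenvalues of $G$ are $0$ and $N/n$) yields $M\mathbf 1 = k\mathbf 1$ for a constant $k$, i.e. $\Gamma$ is $k$-regular. Once $\Gamma$ is regular, $M$ and $J$ commute, so they are simultaneously diagonalizable: on $\mathbf 1^\perp$, $J$ acts as $0$, so $G$ restricted to $\mathbf 1^\perp$ equals $(1-b)I + (a-b)M$. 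Since $G$ has only eigenvalues $0$ and $N/n$, the matrix $M$ restricted to $\mathbf 1^\perp$ has at most two eigenvalues; together with the eigenvalue $k$ on $\mathbf 1$, $M$ has at most three distinct eigenvalues. A connected regular graph with exactly three eigenvalues is strongly regular (a classical fact; one recovers $\lambda,\mu$ from the eigenvalues), and the case of fewer eigenvalues / disconnected $\Gamma$ corresponds to the degenerate situation — here is exactly where the hypothesis $a\ne -b$ is used, to exclude the equiangular case that Waldron handled separately, and where the ``two of which are regular simplices'' exceptions arise (the simplex being the degenerate strongly regular graph, e.g. the complete graph or empty graph, whose embedding is a regular simplex). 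I would treat the disconnected case by noting each component must itself carry the same structure.

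Next, the spherical-embedding identification: having a strongly regular graph $\Gamma = \mathrm{SRG}(v,k,\lambda,\mu)$ with $v = N$, its adjacency matrix $M$ has eigenvalues $k$ (on $\mathbf 1$) and two ``restricted'' eigenvalues $r > s$ on $\mathbf 1^\perp$, with known multiplicities $f, g$. The FUNTF condition forces $G$ to be a rank-$n$ projection-like matrix, so $G = \frac Nn P$ where $P$ projects onto an $n$-dimensional eigenspace of $M$. Matching: since $G = (1-b)I + (a-b)M + bJ$ and $G$ kills the $0$-eigenspace, that $0$-eigenspace of $G$ must be precisely one of the restricted eigenspaces of $M$ (the one with eigenvalue $s$, say, giving $(1-b) + (a-b)s = 0$), so $n$ equals the dimension of the complementary span: either $n = f$ (embedding via the $r$-eigenspace) or $n = f+1$ (via the $r$-eigenspace together with $\mathbf 1$, giving the shifted $2$-design that is not centered), or the analogous choices with $r$ and $s$ swapped. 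This is exactly the Delsarte–Goethals–Seidel construction \cite[Example 9.1]{del77b}: project the standard basis $\{e_i\}$ of $\reals^v$ onto an eigenspace of $M$ and normalize. Counting: each SRG has two restricted eigenspaces, and for each we may or may not adjoin $\mathbf 1$ — but adjoining $\mathbf 1$ to the larger-multiplicity space, or the analysis of which combinations actually yield a two-distance set with $a\ne-b$, cuts the naive four down to three genuinely distinct two-distance $2$-designs (this is the ``three'' in the statement; I would verify the arithmetic that the fourth coincides with, or degenerates relative to, one already counted). Finally, each $2$-design of size $N$ in $\reals^n$ gives two tight frames: itself and its ``complementary'' frame obtained by the Naimick/complement construction in $\reals^{N-n}$ (equivalently, switching the roles of the $r$- and $s$-eigenspaces), and among these six the two smallest are the regular simplex in $\reals^{N-1}$ and are realized when the graph is complete or empty. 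I would close by assembling these counts into the statement.

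\textbf{Main obstacle.} The hardest part is the regularity step and the careful bookkeeping of degenerate cases: proving $\Gamma$ must be regular directly from the two-eigenvalue property of $G$ without assuming commutativity, and then correctly enumerating which eigenspace-plus-$\mathbf 1$ combinations produce valid two-distance tight frames so as to land on exactly ``three designs / six frames / two simplices.'' The hypothesis $a \ne -b$ enters precisely to rule out the equiangular degeneration, and isolating where it is truly needed (versus where the argument goes through regardless) requires care.
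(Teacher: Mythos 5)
Your overall architecture (Gram matrix $\to$ strongly regular graph $\to$ Delsarte--Goethals--Seidel eigenspace embeddings $\to$ enumeration) parallels the paper's, and your spectral route to strong regularity (a connected regular graph with three eigenvalues is strongly regular) is a legitimate alternative to the paper's argument, which instead counts common neighbors combinatorially from $\frac Nn\langle x_k,x_l\rangle=\sum_i\langle x_k,x_i\rangle\langle x_i,x_l\rangle$ (Proposition~\ref{prop:2-tight-srg}). But two steps have genuine gaps. First, regularity of $\Gamma$: you assert that ``pushing through'' $G\mathbf 1$ yields $M\mathbf 1=k\mathbf 1$, but $\mathbf 1$ is not a priori an eigenvector of $G$ merely because $G$ has two eigenvalues --- it could have components in both eigenspaces. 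The missing ingredient is the diagonal of $G^2=\frac Nn G$: since $G_{ii}=1$, every row of $G$ has squared norm $N/n$, i.e. $1+a^2d_i+b^2(N-1-d_i)=N/n$ with $d_i$ the degree of vertex $i$, and solving for $d_i$ requires $a^2\ne b^2$. \emph{This} is where $a\ne -b$ is essential (the remark following Theorem~\ref{thm:na} exhibits $28$ equiangular lines in $\reals^7$ where regularity fails), not merely in ``excluding the case Waldron handled''; you explicitly leave the location of this hypothesis unresolved.

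Second, the enumeration does not produce the counts in the statement. Once $\Gamma$ is regular, $I$, $M$, $J$ commute and the column space of $G$ must be a sum of a nonempty subset of the three common eigenspaces $\langle\mathbf 1\rangle$, $E_1$, $E_2$; discarding the degenerate choice $\langle\mathbf 1\rangle$ alone leaves exactly six frames, of which the three \emph{designs} (those with $\mathbf 1$ in the kernel of $G$, i.e.\ $\sum_ix_i=0$) are $S_1$ (column space $E_1$), $S_2$ ($E_2$), and the projection onto $E_1\oplus E_2=\mathbf 1^\perp$, which is the centered regular $(N-1)$-simplex. Your scheme ``one restricted eigenspace, optionally adjoin $\mathbf 1$'' yields only four frames and omits precisely the two simplices ($E_1\oplus E_2$ and all of $\reals^N$, the orthonormal basis); contrary to your account, the simplex arises from \emph{every} SRG, not only from complete or empty graphs --- this is the content of Theorem~\ref{thm:2-design}, where the case $\beta,\gamma>0$ in \eqref{eq:a} is shown to force $\beta^2=n_1/(n_1+n_2)$, $\gamma^2=n_2/(n_1+n_2)$ and hence the simplex. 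Relatedly, the ``two frames per design'' mechanism is the centered-versus-shifted dichotomy of Proposition~\ref{prop:tf-design} (adjoin the $\mathbf 1$-direction or not), not the Naimark complement: the Naimark complement of $S_1$ lives in $\reals^{N-n_1}$ and is the \emph{shifted} $S_2$, so that pairing scrambles the count and degenerates on the simplex.
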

The proof is given in Sections \ref{sect:first},~\ref{sect:proof}. As an intermediate result (see Theorem \ref{thm:2-design}), we fully characterize spherical 2-designs that form spherical two-distance sets.

Strongly regular graphs form examples of classical objects in algebraic combinatorics called association schemes \cite{Brouwer12}. 
Although we do not use the language of schemes in this paper, we note that our results contribute to the study of the general problem
of characterizing spherical designs that can be obtained from association schemes.

Note that the connection between equiangular line sets and strongly regular graphs
is well known (Seidel et al.~\cite{Seidel73,del77b};
see also \cite{Godsil2001}). It has been recently addressed in the
context of frame theory, particularly in the study of ETFs
\cite{stro03,hol04}. A recent paper by Waldron \cite{wal09} proves that an ETF in $\reals^n$
with $N\ge n$ vectors exists if and only if there exists an $\text{SRG}(N-1,k,(3k-N)/2,k/2),$ 
where $k$ is a certain function of $n$ and $N$.  Furthermore, \cite{wal09} also contains  many examples of ETFs in $\reals^n, n\le 50$.
% We note that some of the examples in
%\cite{wal09} are two-distance frames, even though this paper did
%not emphasize the two-distance condition.
Together with Theorem \ref{thm:main} this result completes the description of two-distance tight frames,
equiangular or not.

\section{Basic properties}\label{sect:first}

We begin with an  easy example of 2-distance FUNTFs  which is given by the following construction.
We will need the following theorem.
\begin{theorem}[Larman, Rogers, and Seidel, \cite{lar77}] \label{thm:LRS}
Let $S$ be a spherical two-distance set in $\mathbb{R}^n$. If $|S|
> 2n+1$ then the inner products $a,b$ are related by the equation
$b=(ka-1)/(k-1)$ where $k\in\{2,\dots, \lfloor(1+\sqrt{2n})/2\rfloor\}$ is an integer.
\end{theorem}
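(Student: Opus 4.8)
The plan is to recode the geometry as a statement about one graph attached to $S$, and then to use that the eigenvalues of an integer matrix are algebraic integers. Write $k:=(1-b)/(a-b)$. Since distinct unit vectors have inner product $<1$ we may assume $a>b$, so $1-b>a-b>0$, hence $k>0$ and $k\ne 1$ (as $k=1$ would force $a=1$); a one-line computation shows that the asserted identity $b=(ka-1)/(k-1)$ is exactly the definition of $k$ rewritten. Thus it suffices to prove that $k$ is an integer with $2\le k\le\tfrac12(1+\sqrt{2n})$. We may also assume $\Span S=\reals^n$: otherwise replace $\reals^n$ by $V:=\Span S$, observing that $|S|>2n+1>2\dim V+1$ and that the conclusion for $\dim V$ is stronger than the one claimed for $n$.

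\emph{Producing the eigenvalue.} Let $\Gamma$ be the graph on vertex set $S$ whose edges are the pairs $\{x_i,x_j\}$ with $\langle x_i,x_j\rangle=a$, let $M$ be its $0/1$ adjacency matrix, and let $J_N$ be the all-ones matrix. Then the Gram matrix of $S$ is $G=(1-b)I_N+(a-b)M+bJ_N$, and as the Gram matrix of $N$ vectors spanning $\reals^n$ it is positive semidefinite of rank exactly $n$, so $\dim\ker G=N-n$. For $v\perp\mathbf 1$ one has $Gv=(1-b)v+(a-b)Mv$, which vanishes precisely when $Mv=-kv$; hence $\ker G\cap\mathbf 1^\perp$ is exactly the part of the $(-k)$-eigenspace of $M$ lying in $\mathbf 1^\perp$, and the multiplicity $m$ of $-k$ as an eigenvalue of $M$ satisfies $m\ge\dim\ker G-1=N-n-1$. (The same computation shows that every eigenvalue of $M$ afforded by a vector of $\mathbf 1^\perp$ is $\ge -k$, a fact used in the last step.)

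\emph{Rationality and integrality.} Since $M$ is a symmetric integer matrix, $-k$ is an algebraic integer, and all $d$ of its conjugates over $\mathbb{Q}$ occur in the characteristic polynomial $\chi_M\in\integers[t]$ with the same multiplicity $m$ (they are the roots of the monic irreducible rational factor of $\chi_M$ having $-k$ as a root). If $k\notin\mathbb{Q}$ then $d\ge 2$, so $N=\deg\chi_M\ge dm\ge 2(N-n-1)$, forcing $N\le 2n+2$; combined with $N>2n+1$ this gives $N=2n+2$ and equality throughout, whence $d=2$, $m=n+1$, and $\chi_M$ has no roots other than the two conjugates of $-k$. Then $M$ has exactly two distinct eigenvalues, so $\Gamma$ is a disjoint union of equal-sized complete graphs; but such a graph has integer eigenvalues, contradicting $-k\notin\mathbb{Q}$. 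Therefore $k\in\mathbb{Q}$, and being a positive rational algebraic integer it is an integer; since $k\ne 1$ we get $k\ge 2$, and $b=(ka-1)/(k-1)$ holds.

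\emph{The upper bound, and the main obstacle.} It remains to prove $k\le\tfrac12(1+\sqrt{2n})$, i.e. $(2k-1)^2\le 2n$. The ingredients are the same: $-k$ is an eigenvalue of $M$ of multiplicity $\ge N-n-1\ge n+1$, all other eigenvalues afforded by $\mathbf 1^\perp$ are $\ge -k$, and dually the complementary graph (whose Gram matrix is $G=(1-a)I_N+(b-a)(J_N-I_N-M)+aJ_N$) carries the eigenvalue $k-1$ with the same large multiplicity. From this one derives, by a counting argument, an upper bound on $N$ in terms of $n$ and $k$ that decreases in $k$, and then compares it with $N>2n+1$; this is precisely the argument of Larman, Rogers and Seidel \cite{lar77}. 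I expect this step to be the main obstacle: the naive bookkeeping with $\operatorname{tr}M$, $\operatorname{tr}M^2$ and Cauchy--Schwarz over the at most $n$ ``non-extreme'' eigenvalues yields a bound of the correct order $k=O(\sqrt n)$ but with a constant that is off by about $\sqrt 2$, so extracting the sharp threshold requires more care; an additional nuisance is that $\Gamma$ need not be regular, so $\mathbf 1^\perp$ need not be $M$-invariant and one must argue through $G$ directly.
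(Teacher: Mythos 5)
First, a point of reference: the paper does not prove Theorem~\ref{thm:LRS}. It is imported verbatim from Larman--Rogers--Seidel \cite{lar77}, with the threshold $2n+3$ lowered to $2n+1$ by Neumaier \cite{neu81}, so there is no in-paper argument to compare yours against. With that said, the integrality half of your proof is correct and is the standard spectral argument: the reduction to $\Span S$, the identity $Gv=(1-b)v+(a-b)Mv$ for $v\perp\mathbf 1$, the multiplicity bound $m\ge\dim\ker G-1=N-n-1$, and the Galois-conjugate counting in $\chi_M\in\integers[t]$ are all sound. Your treatment of the boundary case $N=2n+2$ (where $d=2$ and $m=n+1$ force $\chi_M$ to be a power of the degree-two minimal polynomial, so $\Gamma$ is a disjoint union of equal complete graphs and has integer spectrum) is exactly the extra step that achieves the improved threshold $2n+1$ rather than the original $2n+3$. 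Two small loose ends: you should record that $\Gamma$ is neither empty nor complete (otherwise $S$ is a one-distance set and the claim is vacuous or trivial), and that $\rank G=\dim\Span S$ is what justifies $\dim\ker G=N-n$.

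The genuine gap is the upper bound $k\le\tfrac12(1+\sqrt{2n})$, which is half of the theorem's content and which you explicitly leave unproved, deferring to \cite{lar77}. Your own diagnosis of why the obvious route fails is accurate: combining $\operatorname{tr}M=0$, $\operatorname{tr}M^{2}\le N(N-1)$ and Cauchy--Schwarz over the at most $n+1$ eigenvalues different from $-k$ gives $m^{2}k^{2}\le(n+1)\bigl(N(N-1)-mk^{2}\bigr)$, hence $k^{2}\le(n+1)(N-1)/(N-n-1)$; this equals $2n+1$ at the threshold $N=2n+2$ and tends to $n+1$ as $N\to\infty$, whereas the claim requires $(2k-1)^{2}\le 2n$, i.e.\ $k^{2}$ of order $n/2$. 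So the global trace bookkeeping is short of the target by a factor between $\sqrt2$ and $2$ in $k$ and cannot be repaired by ``more care''; the sharp constant in \cite{lar77} comes from a finer argument than these moment estimates. As submitted, the proposal establishes only that $k$ is an integer with $k\ge 2$ and that $b=(ka-1)/(k-1)$; the range $k\le\lfloor(1+\sqrt{2n})/2\rfloor$ remains an appeal to the literature rather than a proof.
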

The original proof of \cite{lar77} had $2n+3$ in place of $2n+1,$
while the above improvement is due to Neumaier \cite{neu81}. Given the value of $a,$
we denote by $b_k(a)$ the corresponding value of $b$.  

%\begin{remark}\label{euclideandistance}
%We observe that if $x$ and $y$ are two distinct points in $S$, then the square of their euclidean distance is $$\|x-y\|^2= 2-2\ip{x}{y} \in \{2-2a, 2-2b\}.$$  Assume that $a<b$ and let $c^2=2-2b,$ and $d^2=2-2a$, then it is known that $$\tfrac{c^2}{d^2}=\tfrac{k-1}{k}$$ where $k$ is given in Theorem~\ref{thm:LRS}, see  \cite{mus09a} for details. 
%\end{remark}

  \begin{proposition}\label{prop:simplex} Let $e_1,\dots,e_{n+1}$ be the standard basis in
$\reals^{n+1}$. The projection of the set
   \begin{equation}\label{eq:simp}
     S=\{e_i+e_j, 1\le i<j\le n+1\}
  \end{equation}
on the hyperplane $x_1+\dots+x_{n+1}=2$ forms a two-distance tight frame for $\reals^n$.
\end{proposition}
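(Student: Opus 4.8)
The plan is to make the (slightly informal) phrase ``projection on the hyperplane'' precise, compute the resulting Gram matrix explicitly, and then obtain tightness for free from symmetry. Write $v=n+1$, let $\mathbf 1=(1,\dots,1)\in\reals^{v}$, and let $H_0=\{x\in\reals^{v}:\langle x,\mathbf 1\rangle=0\}$, a linear hyperplane isometric to $\reals^{n}$ and parallel to the affine hyperplane $\{\langle x,\mathbf 1\rangle=2\}$ in the statement. Each $e_i+e_j$ already lies on that affine hyperplane, so the object to study is the orthogonal projection of $S$ onto $H_0$, equivalently the translate of $S$ by $-c$, where $c=\binom v2^{-1}\sum_{i<j}(e_i+e_j)$ is the common centroid of $S$. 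Since each $e_i$ occurs in exactly $v-1$ pairs, $\sum_{i<j}(e_i+e_j)=(v-1)\mathbf 1$ and hence $c=\tfrac2{n+1}\mathbf 1$. I would then set $y_{ij}=e_i+e_j-c\in H_0$ and $x_{ij}=y_{ij}/\|y_{ij}\|$.

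First I would record the inner-product data. Using $\langle e_i+e_j,\mathbf 1\rangle=2$ and $\|\mathbf 1\|^{2}=n+1$ one gets $\|y_{ij}\|^{2}=2-\tfrac4{n+1}=\tfrac{2(n-1)}{n+1}$ for every pair, and for $\{i,j\}\neq\{k,l\}$
\[
\langle y_{ij},y_{kl}\rangle=\langle e_i+e_j,\,e_k+e_l\rangle-\tfrac4{n+1}
=\begin{cases}\ \tfrac{n-3}{n+1}, & |\{i,j\}\cap\{k,l\}|=1,\\[3pt] -\tfrac4{n+1}, & \{i,j\}\cap\{k,l\}=\emptyset.\end{cases}
\]
Dividing by $\|y_{ij}\|^{2}$ shows that the unit vectors $x_{ij}$ have inner products taking only the two values $a=\tfrac{n-3}{2(n-1)}$ and $b=-\tfrac2{n-1}$, so $\{x_{ij}:1\le i<j\le n+1\}$ is a spherical two-distance set in $H_0\cong\reals^{n}$. (For $n=2$ no two pairs are disjoint and the set reduces to the equilateral triangle; for the exceptional value $n=7$ one has $a=-b=\tfrac13$ and the set is the $28$-vector equiangular tight frame in $\reals^{7}$; in every case it is a two-distance tight frame in the sense of the paper.)

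For tightness I would use a symmetry argument rather than a direct computation. The symmetric group $S_{n+1}$ acts on $\reals^{n+1}$ by permuting coordinates via orthogonal matrices $P_\sigma$; these fix $\mathbf 1$, hence preserve $H_0$ and act orthogonally on it, and $P_\sigma y_{ij}=y_{\sigma(i)\sigma(j)}$, so $P_\sigma$ permutes the family $\{x_{ij}\}$. Consequently the frame operator $F=\sum_{i<j}x_{ij}x_{ij}^{\top}$ (which maps $\reals^{n+1}$ into $H_0$ and annihilates $\mathbf 1$) satisfies $P_\sigma F P_\sigma^{\top}=F$ on $H_0$ for every $\sigma$. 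The representation of $S_{n+1}$ on $H_0$ is the standard representation, irreducible over $\reals$, so by Schur's lemma $F=\alpha\id$ on $H_0$ with $\alpha=\tfrac1n\operatorname{tr}F=\tfrac1n\binom{n+1}2=\tfrac{n+1}2>0$; that is, $\{x_{ij}\}$ is a tight frame for $\reals^{n}$, and being unit-norm and two-distance it is a two-distance tight frame. Alternatively one can bypass representation theory and verify \eqref{eq:fp} directly via \eqref{eq:fp1}: with $N=\binom{n+1}2$ and $\nu_a=(n+1)\binom n2$ the number of pairs of $2$-subsets of $\{1,\dots,n+1\}$ meeting in one point, a short computation gives $FP(S)=N^{2}/n$, whence Theorem \ref{thm:BF} forces tightness.

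There is no serious obstacle here: the only point needing care is reading ``projection'' correctly (one must center at the centroid, not merely observe that $S$ sits inside the affine hyperplane) and keeping track of the low-dimensional degenerate cases. The core content is the elementary Gram computation above together with the standard averaging/Schur argument.
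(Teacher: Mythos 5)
Your proposal is correct, and your computation of the projected/normalized vectors and their two inner products $a=\tfrac{n-3}{2(n-1)}$, $b=-\tfrac{2}{n-1}$ coincides with the paper's. Where you diverge is in how tightness is established: the paper counts the pairs realizing each inner product ($\nu_1=\tfrac12(n-1)n(n+1)$ and $\nu_0$ its complement in $\binom{N}{2}$), plugs into the frame potential \eqref{eq:fp1}, and checks that $FP(S)=N^2/n$, so that Theorem \ref{thm:BF} forces tightness; your primary argument instead invokes the transitive action of $S_{n+1}$ permuting the family $\{x_{ij}\}$ together with the irreducibility of the standard representation on the hyperplane $H_0$, so that the frame operator commutes with an irreducible action and must be $\tfrac{N}{n}\id$ by a Schur-type argument (the eigenspace version you need for a symmetric operator is immediate from irreducibility). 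Both are valid; the paper's route stays entirely within the frame-potential machinery it has already set up and requires only the combinatorial count, while yours avoids any counting, generalizes instantly to any orbit of an irreducible group action (a standard fact in frame theory), and makes the tightness constant $\alpha=\tfrac{n+1}{2}$ transparent from the trace. Your alternative one-line verification via $\nu_a=(n+1)\binom{n}{2}$ is exactly the paper's proof, and your remarks on reading ``projection'' as centering at the centroid and on the degenerate cases $n=2$ and $n=7$ are accurate refinements the paper leaves implicit.
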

\begin{proof}
%Suppose that $S$ is given by \eqref{eq:simp}, then
% and let $a$ and $b$ be the two values of the inner product, $b<a.$
%From
Note that the inner products of distinct vectors in $S$ are either 1 or 0.
%and so it forms a spherical 2-distance set in the plane $x_1+\dots+x_{n+1}=2$ (after rescaling).
Let
  \begin{gather*}
    \nu_{1,1}=|\{(i,j): i<j,\,\langle e_1+e_2,e_i+e_j\rangle=1\}|
   \end{gather*}
Observe that $(i,j)$ is contained in this set if and only if $i=1$
or $i=2,$ and we obtain $\nu_{1,1}=2(n-1).$ By symmetry, the value
$\nu_{1,1}$ does not depend on the choice of the fixed vector
$e_1+e_2,$ so the total number of unordered pairs of vectors in
$S$ with inner product 1 equals
  $$
   \nu_1=\frac 12 \binom {n+1}2 \nu_{1,1}=\frac 12(n-1)n(n+1).
  $$
The pairs of distinct vectors not counted in $\nu_1$ are orthogonal,
and their number is
  $$
   \nu_0=\binom{n(n+1)/2}{2}-\nu_1=\frac18 (n-2)(n-1)n(n+1).
 $$
Now let us project the vectors of $S$ on the plane
$x_1+\dots+x_{n+1}=2$ and scale the result to place them on the
unit sphere around the point $z_0=\frac2{n+1}(1,1,\dots,1).$ Since each vector $z_{i,j}=e_i+e_j$  already belongs to the plane, the resulting vector will be $z'_{i,j}=c(z_{i,j}-z_0)$ where $c= \tfrac{1}{\sqrt{2 - \frac{4}{n+1}}}.$ By a series of computations we see that $$\ip{z'_{i,j}}{z'_{k,l}}=c^2(\ip{z_{i,j}}{z_{k,l}} -\tfrac{4}{n+1}).$$ And since $\ip{z_{i,j}}{z_{k,l}}  \in \{0,1\}$ we arrive that the fact that $\ip{z'_{i,j}}{z'_{k,l}} \in \{a, b\}$ with $a=(n-3)/(2(n-1))$ and  $b=-2/(n-1)$.

%Now observe that the squared euclidean distance between points in $S$ is either $2$ or $4$ and by remark 
%applying this By simple algebra or Theorem $2$  in \cite{mus09a},
%the obtained vectors have pairwise inner
%products that are either $a=(n-3)/(2(n-1))$ or $b=-2/(n-1)$. 

This
information suffices to compute the frame potential, and we obtain
  $$
    FP(S)= N+2 \nu_1a^2+2\nu_0 b^2=\frac
    {N^2}{n}
    $$
The frame potential meets the lower bound \eqref{eq:fp} with
equality, which implies that $S$ forms a FUNTF for $\reals^n.$
\end{proof}

We next give   a characterization
result for two-distance FUNTFs.
\begin{definition}  Let $S\subset \reals^{n}$ be a spherical
two-distance set with inner products $a$ and $b$, $b<a,$ let $x_i\in S,$ and let
   $$
   N_{a,i}=|\{j: x_j\in S, \langle x_i,x_j\rangle=a\}|.
   $$
$S$ is called {\em regular} if $N_{a,i}$ does not depend on $i$. For regular sets
we denote this quantity simply by $N_a.$
\end{definition}
%$N_a=|\{(i,j):  i<j, x_i,x_j\in S, \langle x_i,x_j\rangle=a\}|.$
%
\begin{theorem}\label{thm:na} Let $S\subset \reals^n, |S|=N$ be a two-distance FUNTF 
with inner products $a$ and $b$ such that $a^2-b^2\ne 0.$ 
Then $S$ is regular and 
%Then every column of $G$ contains the same number of entries $a$ and $b$, and
%the count of $a$'s is given by
  \begin{gather}\label{eq:Na}
  N_a=\frac{({N}/{n})-1 -(N-1)b^2}{a^2-b^2}\\
-n(a+b)-nab(N-1)=N-n \quad\text{or}\quad (N-n)(a+b)-nab(N-1)=N-n.  
  %   b=\frac{N+an-n}{n(a-aN-1)} \quad \text{or} \quad b=\frac{(N-n)(1-a)}{N-n(a(N-1)+1)}. 
  \label{eq:b}
  \end{gather}
\end{theorem}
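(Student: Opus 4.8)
The plan is to compute the frame potential of $S$ in two different ways and also to exploit the tight-frame condition \eqref{tight-frame-char} applied to a cleverly chosen vector. Write $x_i \in S$ and let $N_{a,i}$ be as in the definition above; set $N_{b,i} = N - 1 - N_{a,i}$, the number of $j \ne i$ with $\ip{x_i}{x_j} = b$. The first step is to fix $x_i$ and sum the tight-frame identity $\frac{N}{n}\, x = \sum_j \ip{x}{x_j} x_j$ evaluated at $x = x_i$, then take the inner product of both sides with $x_i$ again: this yields
\[
\frac{N}{n} = 1 + a^2 N_{a,i} + b^2 N_{b,i} = 1 + (a^2 - b^2) N_{a,i} + (N-1) b^2 .
\]
Since the left-hand side is independent of $i$ and $a^2 - b^2 \ne 0$, this forces $N_{a,i}$ to be constant, proving that $S$ is regular and simultaneously giving \eqref{eq:Na}. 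This is the easy half.

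For \eqref{eq:b} the idea is to extract a \emph{second} scalar identity by pairing the tight-frame condition with a different vector, or equivalently by summing $\ip{x_i}{x_j}$ over the frame in a weighted fashion. Concretely, apply \eqref{tight-frame-char} with $x = x_i$, take the inner product with $x_j$ for a fixed $j \ne i$, and then sum over all $j$ with $\ip{x_i}{x_j} = a$ (there are $N_a$ such $j$). The left side is $\frac{N}{n} \cdot a N_a$. The right side expands as $\sum_{k} \ip{x_i}{x_k} \sum_{j:\ip{x_i}{x_j}=a} \ip{x_k}{x_j}$, which reorganizes into a sum over the $2$-point correlations governed by how many common "$a$-neighbors" pairs of vectors share. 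The key combinatorial input here is that, although $S$ is a priori only a two-distance set, the constancy of $N_a$ together with a double-counting of triples $(x_i,x_j,x_k)$ with prescribed inner products — the same bookkeeping that underlies strong regularity — lets one express this triple sum in terms of $N$, $n$, $a$, $b$, and $N_a$ alone. Substituting the value of $N_a$ from \eqref{eq:Na} and simplifying produces a polynomial relation in $a$, $b$, $N$, $n$; factoring it yields the disjunction in \eqref{eq:b}, the two cases corresponding to the two eigenvalues of the associated graph's adjacency matrix (equivalently, to $S$ or its "complement").

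I expect the main obstacle to be the triple-correlation bookkeeping in the second identity: a priori, a two-distance set need not have constant numbers of common neighbors, so one must argue that the relevant triple sums collapse. The cleanest route is probably to avoid triples entirely and instead pick the auxiliary vector more economically — e.g., apply \eqref{tight-frame-char} to $x_i$ and pair with $\sum_{k} x_k$, using $\|\sum_k x_k\|^2 = N + 2\nu_a a^2 + (N(N-1)-2\nu_a)b^2$ and the regularity relation $\nu_a = \tfrac12 N N_a$, together with the frame identity $\sum_k \ip{x_i}{x_k} = $ (coordinate of a known vector). Matching the two expressions for $\ip{x_i}{\sum_k x_k}$ summed appropriately over $i$, and then substituting \eqref{eq:Na}, should give a single quadratic whose factorization is exactly \eqref{eq:b}. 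The remaining work is routine algebra; the only care needed is tracking the two sign conventions ($b < a$) so that the factor $(a+b)$ versus $(N-n)(a+b)/n$ comes out on the correct side.
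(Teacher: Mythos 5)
Your final route is essentially the paper's proof. The first identity, $N/n = 1 + a^2N_{a,i} + b^2 N_{b,i}$, is exactly the paper's computation of $(G^2)_{ii} = (N/n)\,G_{ii}$, and it yields regularity and \eqref{eq:Na} just as you say. For \eqref{eq:b}, the paper notes that regularity makes ${\bf 1}$ an eigenvector of $G$; since the only eigenvalues of $G$ are $0$ and $N/n$, the common row sum $t = 1 + aN_a + b(N-1-N_a)$ equals $0$ or $N/n$, and substituting \eqref{eq:Na} gives the two alternatives. Your pairing with $s=\sum_k x_k$ produces the same dichotomy: $\ip{x_j}{s}=t$ for all $j$, so applying \eqref{tight-frame-char} at $x_i$ and pairing with $s$ gives $\tfrac{N}{n}\,t = t^2$, i.e.\ $t\in\{0,N/n\}$. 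The two arguments coincide.

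Two corrections are worth recording. First, the triple-correlation route you propose initially genuinely does not close at this stage: constancy of the common-neighbor counts is exactly strong regularity, which the paper establishes only later (Proposition \ref{prop:2-tight-srg}), using the present theorem as input; you were right to abandon it. Second, $\|\sum_k x_k\|^2 = \sum_{j,k}\ip{x_j}{x_k} = N + 2\nu_a a + (N(N-1)-2\nu_a)b$ --- the inner products are not squared; the expression you wrote is the frame potential $FP(S)$ from \eqref{eq:fp1}. The slip is harmless here because the argument only needs $\|s\|^2 = Nt$ (or nothing at all, in the pairing above), not the explicit value, but with the formula as written the ``routine algebra'' would not come out.
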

\begin{proof} $G$ is similar to a diagonal matrix of order $N$ with $n$ nonzero entries $\lambda=N/n$ on the diagonal.
 Therefore, $G^2- \lambda G =0,$ so $G^2 = \lambda G$ and $(G^2)_{ii}=\lambda$ for all $i$ since $G_{ii}=1.$
We also have $(G^2)_{ii}=\sum_{j=1}^N G_{ij}^2,$ so the norm of
every row and of every column is the same and equals
$\sqrt\lambda.$

Now let $N_a$ be the number of entries $a$ in any fixed column.
Then
 $$
    1+a^2 N_a+b^2(N-1-N_a)=\frac N n.
 $$
 This implies \eqref{eq:Na}.
 
Thus, ${\bf 1}=(11\dots1)$ is an
eigenvector of the Gram matrix $G$ with eigenvalue 0 or $N/n$.
Suppose it is the former, then $G\cdot{\bf1}=0,$ so the sum of
entries in every row is 0. This implies that
   $1+aN_a+(N-1-N_a)b=0.$
   By substituting $N_a$ given by~\eqref{eq:Na} into this last equation, and after some simplifications  we obtain the first of the two options for $b$ in~\eqref{eq:b}.

Now suppose that $G \cdot \mathbf{1} = \frac{N}{n} \mathbf{1},$ so
the sum of entries of $G$ in any given row equals $N/n.$ Repeating
the calculation performed for the first case, we obtain the second
of the two possibilities for $b.$
\end{proof}

\begin{remark} Another way to express the alternative in \eqref{eq:b} is as follows. 
The sum of squared entries of every row of $G$ equals $N/n$ and the sum of the entries is either
$0$ or $N/n$. These two equations translate into the two conditions for $a$ and $b.$
   
   In the next section we characterize FUNTF for each of the two
cases in \eqref{eq:b}.
\end{remark}
\begin{remark} If $a=-b,$ then the statement of Theorem \ref{thm:na} does not hold.
Indeed, consider the set $S=\{x_1,\dots,x_{28}\}$ of 28 vectors in
$\mathbb{R}^7$ constructed as in \eqref{eq:simp}. By
Theorem \ref{thm:LRS} the inner products between distinct vectors
in $S$ are $\pm1/3,$ so they form a set of equiangular lines. For
any given vector $x\in S$ we have $|\{y\in S: \langle
x,y\rangle=1/3\}|=12$ and $|\{y\in S: \langle
x,y\rangle=-1/3\}|=15.$ Now consider the set $S'=\{-x_1, x_2,
\dots,x_{28}\}$ which is also a FUNTF with inner products
$\pm1/3,$ but the first column of $G$ contains 12 entries equal to
$-1/3,$ which is different from all the other columns.
\end{remark}

%\remove{
%Finally, we note one more necessary condition for the existence of
%a two-distance tight frame implied by Theorem~\ref{thm:LRS}.
%\begin{proposition} Let $S$ be a two-distance non-equiangular FUNTF in $\reals^n$ with $N$ vectors, inner products $a$ and $b$ and $N_a$ entries $a$ in each row of the Gram matrix. Suppose that $N>2n+1$, then
%    \begin{equation}\label{eq:nc}
%    \frac{N(N-1)(ka-1)^2}{2(k-1)^2}-\frac{N_a}{(k-1)^2}((2k-1)a^2-2ka+1)=\frac{N(N-n)}{2n},
%    \end{equation}
%    where $k\in\{2,\dots,\lfloor(1+\sqrt{2n})/2\rfloor\}.$
%\end{proposition}
%\begin{proof}
%Indeed, let $S$ be such a frame. Using the value of the frame
%potential found in \eqref{eq:fp1} together with $b=(ka-1)/(k-1),$
%we obtain
%   \begin{align}\label{eq:F}
%   F_{N_a}(a):=\sum_{i<j}^N \langle x_i,x_j\rangle^2
%     =N_a a^2+ \Big(\frac{N(N-1)}2-N_a\Big)\Big(\frac{ka-1}{k-1}\Big)^2,
%%     \frac{N(N-1)(ka-1)^2}{2(k-1)^2}-\frac{N_a}{(k-1)^2}((2k-1)a^2-2ka+1,
%   \end{align}
%which is the same as the left-hand side of \eqref{eq:nc}.
%At the same time, since $FP(S)=N^2/n=2F_{N_{a}}(a)+N.$ Consequently, $F_{N_{a}}(a)=\frac{N(N-n)}{2n}$ which conclude the proof. % his expression also equals the right-hand side of \eqref{eq:nc}.
%\end{proof}
%These necessary conditions on the parameters are really useful for
%small values of $n$. Indeed, if $n\le 12,$ then $k$ can take only
%the value 2, which enables us to rule out many sets of parameters.
%}
%\vspace*{-.3in}
\section{Two-distance FUNTFs and strongly regular graphs} \label{sect:proof}
Connections between equiangular line sets and ETFs on the one side
and strongly regular graphs on the other are well known and have
been used in the literature to characterize the sets of parameters
of ETFs \cite[Ch.~11]{Godsil2001}, \cite{wal09}. In this section
we extend this connection by relating two-distance (non
equiangular) FUNTFs, 2 designs, and strongly regular graphs.
%
%The following lemma is a standard result in frame theory.
%%we will use the following well-known characterization of tight frames.
%\begin{lemma}\label{lem:tight-frame-char}
%A finite collection of vectors $S=\{x_i,i=1,\dots N\}\subset \reals^n$ is an $A$-tight frame if and only if for any $x
%\in \reals^n$, $ A x = \sum_{i=1}^N \langle x,x_i \rangle x_i$.
%\end{lemma}

%\begin{proof}
%The ``if" part is obvious. For the ``only if" we consider the definition of an $A$-tight frame for $x$, $y$, and $x+y$.
%\begin{align*}
%A||x||^2 + 2A \langle x,y \rangle + A||y||^2 &= A||x+y||^2 \\
%& = \sum_{i=1}^N \langle x+y,x_i \rangle^2 \\
%&=
% \sum_{i=1}^N \langle x,x_i \rangle^2 + 2 \sum_{i=1}^N \langle x,x_i \rangle \langle y,x_i \rangle + \sum_{i=1}^N \langle y,x_i \rangle^2\\
% &= A||x||^2 + 2 \sum_{i=1}^N \langle x,x_i \rangle \langle y,x_i \rangle + A||y||^2.
% \end{align*}
%Hence for any $y\in \reals^n$, $\langle Ax,y \rangle=\langle \sum_{i=1}^N \langle x,x_i \rangle x_i, y\rangle,$ so $ A x = \sum_{i=1}^N \langle x,x_i \rangle x_i$.
%\end{proof}

We begin with a necessary condition for the existence of
two-distance FUNTFs. Let $S$ be such a frame. The Gram matrix of
any two-distance set with inner products $a,b$ can be written as
    \begin{equation}\label{eq:F}
    G=I+a\Phi_1+b\Phi_2,
    \end{equation}
where $\Phi_1$ and $\Phi_2$ are the corresponding indicator matrices. 
We also denote by $\Gamma_1$ and $\Gamma_2$ the graphs with adjacency matrices $\Phi_1$ and $\Phi_2,$ respectively. 

%In this section we will deal with \textit{regular} frames only, i.e. those with the same distribution of inner products for each vector.

%A tight frame with the sum of vectors equal to $0$ is called a \textit{spherical $2$-design}.

\begin{proposition}\label{prop:tf-design}
If $S$ is a $2$-distance FUNTF in $\reals^n$ with inner products $a,b$, then $S$ is either an $n$-dimensional spherical $2$-design, or is similar to an $(n-1)$-dimensional spherical $2$-design contained in a subsphere of radius $\sqrt{1-1/n}$. In the former 
(resp., latter) case $a$ and $b$ satisfy the first (resp., second) equality in \eqref{eq:b}.
\end{proposition}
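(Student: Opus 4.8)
The plan is to invoke Theorem~\ref{thm:na} to split into two cases according to whether the all-ones vector $\mathbf 1$ is annihilated by the Gram matrix $G$ or is an eigenvector of $G$ with eigenvalue $N/n$, and then in each case to check directly the two defining conditions of a spherical $2$-design in \eqref{eq:design} --- vanishing centroid and the tight-frame identity $\sum_{i,j}\langle x_i,x_j\rangle^2 = N^2/d$ with $d$ the ambient dimension --- after a translation and a rescaling in the second case.

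First I would record the consequences of Theorem~\ref{thm:na}: $S$ is regular, so every row of $G$ has the same sum $1+aN_a+b(N-1-N_a)$; hence $\mathbf 1$ is an eigenvector of $G$, and since $G$ has only the eigenvalues $0$ and $N/n$, this common row sum is either $0$ or $N/n$, corresponding respectively to the first or the second equality in \eqref{eq:b}. I would also note that, $S$ being a tight frame, Theorem~\ref{thm:BF} gives $\sum_{i,j}\langle x_i,x_j\rangle^2 = N^2/n$.

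In the case $G\mathbf 1 = 0$: writing $X$ for the $n\times N$ matrix with columns $x_i$, so that $G=X^TX$, we have $X^TX\mathbf 1 = X^T\!\bigl(\sum_i x_i\bigr)=0$; since $S$ spans $\reals^n$ the map $X^T$ is injective, so $\sum_i x_i=0$, which together with the frame-potential identity is exactly \eqref{eq:design}. Thus $S$ is an $n$-dimensional spherical $2$-design and $a,b$ satisfy the first equality in \eqref{eq:b}. In the case $G\mathbf 1=\tfrac{N}{n}\mathbf 1$: I would set $c=\tfrac1N\sum_i x_i$ and compute $\|c\|^2=\tfrac1{N^2}\mathbf 1^TG\mathbf 1=\tfrac1n$ and $\langle x_i,c\rangle=\tfrac1N(G\mathbf 1)_i=\tfrac1n$ for all $i$, whence $\langle x_i-c,c\rangle=0$; thus the vectors $x_i-c$ all lie in the hyperplane $H:=c^\perp$, and since $x_i=(x_i-c)+c$ with $c\perp H$ and the $x_i$ span $\reals^n$, the $x_i-c$ span all of $H$, which is $(n-1)$-dimensional. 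A direct computation gives $\|x_i-c\|^2=1-\tfrac1n$, so $S$ lies on a subsphere of radius $\sqrt{1-1/n}$ (the intersection of $S^{n-1}$ with the hyperplane $\{\langle x,c\rangle=1/n\}$, whose distance from the origin is $1/\sqrt n$). Rescaling, $y_i:=(1-1/n)^{-1/2}(x_i-c)$ are unit vectors in $H\cong\reals^{n-1}$ with $\sum_i y_i=0$; using $\langle x_i-c,x_j-c\rangle=\langle x_i,x_j\rangle-\tfrac1n$ together with $\sum_{i,j}\langle x_i,x_j\rangle^2=N^2/n$ and $\sum_{i,j}\langle x_i,x_j\rangle=\mathbf 1^TG\mathbf 1=N^2/n$, I would obtain $\sum_{i,j}\langle x_i-c,x_j-c\rangle^2=N^2(n-1)/n^2$ and hence $\sum_{i,j}\langle y_i,y_j\rangle^2=N^2/(n-1)$, so $\{y_i\}$ is a spherical $2$-design in $\reals^{n-1}$ and $S$ is similar to it; here $a,b$ satisfy the second equality in \eqref{eq:b}.

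The computations are all routine; the one point to be careful about is the dimension bookkeeping in the second case --- verifying that the centered set $\{x_i-c\}$ spans the whole hyperplane $c^\perp$, so that the resulting design really lives in $\reals^{n-1}$ and is a frame there, which rests on $c\ne 0$, i.e. $\|c\|=1/\sqrt n\ne 0$ --- together with keeping track of the scaling factor $(1-1/n)^{-1/2}$ so that the tight-frame constant comes out as $N^2/(n-1)$ rather than $N^2/n$. It is also worth remarking that the argument uses Theorem~\ref{thm:na}, hence implicitly the hypothesis $a^2\ne b^2$; the equiangular case genuinely behaves differently, as the last remark of Section~\ref{sect:first} shows.
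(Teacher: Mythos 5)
Your proof is correct and follows essentially the same route as the paper's: the same dichotomy according to whether the common row sum of $G$ is $0$ or $N/n$ (the paper derives it by applying the tight-frame identity \eqref{tight-frame-char} to $s=\sum_i x_i$, you via the spectrum of $G$ and regularity), and in the second case the identical translation by $s/N$ and rescaling by $(1-1/n)^{-1/2}$, with the frame potential of the shifted set computed from that of $S$. Your explicit remark that the argument rests on Theorem~\ref{thm:na} and hence on $a^2\ne b^2$ matches the paper's implicit use of regularity there.
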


\begin{proof} Let $S=\{x_i: 1\le i\le N\}$ and let $s=\sum_{i=1}^N x_i.$
 Then for each $i$, $1\leq i\leq N$ the value $t:=\langle x_i, s\rangle$
 does not depend on $i$ and is equal to $t=N_a a + (N-N_a) b + 1,$
where $N_a$ is given in \eqref{eq:Na}. 
 
Applying~\eqref{tight-frame-char} for $x=s,$ we obtain 
   $$
   \frac N n s = \sum_{i=1}^N t x_i = t s.
   $$
Hence either $s=0$ and $S$ is a spherical $2$-design, or $t=\frac N n$
and then $\langle s, s \rangle = N t = \frac {N^2} n.$

Suppose that $s\ne 0$ (equivalently $t=N/n$). For each $i$, $1\leq i\leq N$, denote $y_i=\frac {x_i-s/N}{\sqrt{1-1/n}}$.
We will show that the set $S'=\{y_i: i=1,\dots,N\},$ which is similar to the set $S,$ forms a spherical $2$-design in $\reals^{n-1}.$ This will imply that $S$ lies on a sphere of radius $\sqrt{1-1/n}$ in $\reals^{n}.$

First we check that $\langle y_i,s\rangle = 0$ for all $i$. Indeed,
   \begin{equation}\label{eq:0}
\langle y_i, s \rangle = \frac {\langle x_i, s \rangle - {\langle s, s \rangle}/ N}{\sqrt{1-1/n}}= \frac {N/n - \frac{N^2/n}{N}}{\sqrt{1-1/n}}=0.
  \end{equation}
This establishes that $S'$ is an $(n-1)$-dimensional set.
Moreover, $S'$ lies on the unit sphere. Indeed, using that $\langle y_i, s\rangle = 0,$ we obtain
  $$
  \|y_i\|^2
   =\frac {\|x_i\|^2-\|s/N\|^2} {1-1/n} = \frac {1-\frac {N^2/n}{N^2}} {1-1/n}=1.$$
Clearly $S'$ is a two-distance set. It remains to show that $S'$ forms a 2-design \eqref{eq:design}.
The center-of-masses condition is clearly satisfied. 
%$$\sum_{i=1}^N y_i=\frac {\sum_{i=1}^N x_i - N s/N}{\sqrt{1-1/n}} = 0$$
To check the tight frame condition let us compute the frame potential of $S'$ and use Theorem \ref{thm:BF}.
We have
   \begin{align*}
   \frac {N^2}{n} &= \sum\limits_{i,j=1}^N |\langle x_i, x_j \rangle|^2 = 
    \sum\limits_{i,j=1}^N \Big|\Big\langle \sqrt{1-
   \frac1n}\ y_i +\frac sN,  \sqrt{1-\frac1n}\ y_i +\frac sN \Big\rangle\Big|^2\\
   &= \sum\limits_{i,j=1}^N \Big(\Big(1-\frac1n\Big)\langle y_i, y_j\rangle + \frac{\|s\|^{2}}{N^{2}}\Big)^2\\
   &= \Big(1-\frac1n\Big)^2 FP(S') +\frac 2n \sum\limits_{i,j=1}^N \langle y_i, y_j\rangle + \frac{N^2}{n^2}\\
   &=\Big(1-\frac1n\Big)^2 FP(S')+ \frac{N^2}{n^2}
   \end{align*}
where the last step uses the condition $\sum_i y_i=0$.
Thus, $FP(S')=\frac {N^2} {n-1}$ and therefore, $S'$ is an $(n-1)$-dimensional $2$-design.

Finally, note that $t$ is an eigenvalue of $G$, namely, $G\cdot{\bf1}=t\bf1.$ Recalling that the two cases in \eqref{eq:b}
correspond to $t=0$ and $t=N/n,$ we obtain the final claim of the proposition.
\end{proof}

Observe that a related result was proved in \cite{noz12}. Namely, Theorem 4.7 in that paper states (in our terms) that a spherical
set $S\subset \reals^n$ is a 2-design if and only if $G\cdot{\bf1}=0$ and $G^2=\frac{\sum_{x\in S}\|x\|^2}n G.$

\vspace*{.1in}Due to the Delsarte-Goethals-Seidel theorem (\cite[Theorem 7.4]{del77b}), any spherical two-distance $2$-design is associated with a 
strongly regular graph and therefore, due to Proposition \ref{prop:tf-design}, any two-distance tight frame, too, is associated with
a strongly regular graph. To keep our exposition self-contained we give a short direct proof of this fact.
\begin{proposition}\label{prop:2-tight-srg}
If $S$ is a two-distance tight frame with inner products $a$ and $b$, $a^2-b^2\ne 0$, then its associated graph $\Gamma_1$ (and $\Gamma_2$ as the complement of $\Gamma_1$) is a strongly regular graph.
\end{proposition}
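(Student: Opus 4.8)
The plan is to show that the indicator matrix $\Phi_1$ satisfies a quadratic polynomial identity, which is exactly the algebraic characterization of the adjacency matrix of a strongly regular graph. First I would use Theorem~\ref{thm:na} to conclude that $S$ is regular, so $\Gamma_1$ is a $k$-regular graph with $k=N_a$ given by \eqref{eq:Na}; equivalently $\Phi_1\cdot{\bf 1}=k{\bf 1}$. From \eqref{eq:F} we have $G=I+a\Phi_1+b\Phi_2$ and, since $\Phi_1+\Phi_2=J-I$ (where $J$ is the all-ones matrix), we may eliminate $\Phi_2$ and write $G=(1-b)I+(a-b)\Phi_1+bJ$. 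Set $\Phi:=\Phi_1$ and $c:=a-b\ne 0$ (using the hypothesis $a^2-b^2\ne 0$, so in particular $a\ne b$); then $G=(1-b)I+c\Phi+bJ$.

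Next I would exploit the defining property of a FUNTF Gram matrix, namely $G^2=\lambda G$ with $\lambda=N/n$, which was established in the proof of Theorem~\ref{thm:na}. Substituting $G=(1-b)I+c\Phi+bJ$ into $G^2=\lambda G$ and expanding, I get a matrix identity that is a linear combination of $I$, $\Phi$, $\Phi^2$, $J$, $\Phi J$, and $J\Phi$. Here I would use that $\Phi J=J\Phi=kJ$ (by $k$-regularity) and $J^2=NJ$ to collapse all the $J$-terms, leaving an identity of the form
\[
c^2\,\Phi^2 = \alpha\, I + \beta\, \Phi + \gamma\, J
\]
for explicit scalars $\alpha,\beta,\gamma$ depending only on $a,b,N,n$. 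Since $c\ne 0$ I can solve for $\Phi^2$, obtaining $\Phi^2 = pI + q\Phi + rJ$ for explicit $p,q,r$. This is precisely the statement that $\Gamma_1$ is strongly regular: the $(i,j)$ entry of $\Phi^2$ counts common neighbors of $i$ and $j$, and the identity says this count is $p+q+r$ (so $k=p+q+r$) when $i=j$, equals $q+r=\lambda$ when $i\sim j$ (the diagonal contribution of $pI$ vanishes), and equals $r=\mu$ when $i\ne j$, $i\not\sim j$. One should also note that $\Gamma_1$ is not complete or empty — since $S$ is a genuine two-distance set both inner products $a$ and $b$ actually occur — so the parameters are those of a bona fide SRG, and its complement $\Gamma_2$ is then automatically strongly regular as well.

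The main obstacle is bookkeeping rather than conceptual: one must carry the constants $a,b,N,n$ through the expansion of $G^2=\lambda G$ carefully and verify that the coefficient of $\Phi^2$ is $c^2=(a-b)^2\ne 0$, so that $\Phi^2$ is genuinely expressible as a linear combination of $I$, $\Phi$, and $J$. A minor point to check is that the regularity $\Phi\cdot{\bf 1}=k{\bf 1}$ is used both to reduce $\Phi J$ and $J\Phi$ and to identify $k=N_a$; this is where the hypothesis $a^2-b^2\ne 0$ enters via Theorem~\ref{thm:na}. Once the identity $\Phi^2 = pI+q\Phi+rJ$ is in hand, reading off $(v,k,\lambda,\mu)=(N,\,N_a,\,q+r,\,r)$ and invoking the complementation formula recorded in the introduction finishes the proof.
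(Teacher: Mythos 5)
Your argument is correct and rests on exactly the same two ingredients as the paper's proof: the identity $G^2=\tfrac{N}{n}G$ for the Gram matrix of a FUNTF, and the regularity of $S$ (constancy of $N_a$) supplied by Theorem~\ref{thm:na}. The difference is one of packaging. The paper works entrywise: it fixes a pair $k,l$ with $\langle x_k,x_l\rangle=a$, expands the $(k,l)$ entry of \eqref{eq:Nn} over the four index classes $I_{\alpha,\beta}$, and uses regularity to reduce everything to the single unknown $C_a=|I_{a,a}|$, whose coefficient $(a-b)^2$ is nonzero. You instead substitute $G=(1-b)I+(a-b)\Phi_1+bJ$ into $G^2=\lambda G$ globally, collapse the $J$-terms via $\Phi_1J=J\Phi_1=N_aJ$ and $J^2=NJ$, and arrive at $\Phi_1^2=pI+q\Phi_1+rJ$, the standard algebraic criterion for strong regularity; the nonvanishing of the coefficient $(a-b)^2$ of $\Phi_1^2$ plays the identical role. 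The two computations are term-for-term equivalent --- reading off the $(k,l)$ entry of your matrix identity for adjacent (resp.\ non-adjacent) $k,l$ recovers the paper's equations for $C_a$ (resp.\ $C_b$) --- but your version delivers both common-neighbor counts and the parameters $\lambda=q+r$, $\mu=r$ in one stroke, while the paper's stays at the level of elementary counting and avoids invoking the $\Phi^2=pI+q\Phi+rJ$ characterization. One small slip: on the diagonal you should get $k=p+r$, not $p+q+r$, since $\Phi_{ii}=0$ kills the $q\Phi_1$ term; this is only a consistency check and does not affect the proof, as strong regularity is read off the off-diagonal entries, which you handle correctly.
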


\begin{proof}
It follows from~\eqref{tight-frame-char} and~\eqref{eq:A} that for any two vectors $x_k$, $x_l$ of $S$, 
   \begin{equation}\label{eq:Nn}
   \frac N n \langle x_k, x_l\rangle = \sum_{i=1}^N \langle x_k, x_i \rangle \langle x_i, x_l \rangle.
   \end{equation}
Fix indices $k$ and $l$ and assume $\langle x_k, x_l\rangle = a$. Let 
  $$
  I_{\alpha,\beta}=\{i\in \{1,\dots,N\}: \langle x_k, x_i \rangle = \alpha\text{ and }\langle x_i, x_l \rangle = \beta\},
    $$
    where $\alpha,\beta\in \{a,b\},$ and let $C_a:=|I_{a,a}|.$  Note that by the symmetry of the Gram matrix $G$, we have that $ |I_{a,b}|=|I_{b,a}|$.
Let us find the cardinality of $I_{a,b},$ i.e., the set of indices $i$ with the entry $a$
in row $k$ and entry $b$ in row $l.$ Consider the subset of indices $i$ in row $k$
of $G$ with $\ip{x_k}{x_i}=a$ except $i=l$ (in this position row $l$ contains 1). There are $N_a-1$
such indices, where $N_a$ is the number of $a$'s in the row (see the remark before Theorem \ref{thm:na}).
 We then need to subtract the number of those $i$ for which $\ip{x_i}{x_l}=a$. But those are precisely the indices in the set $I_{a,a}.$ Consequently, 
    \begin{align}\label{eq:ab}
  |I_{a,b}|=|I_{b,a}|=N_a -C_a-1. 
  \end{align}
%     Furthermore, to find the cardinality of $I_{a,b}$, one needs to consider the number of indices $i$  in row $k$  of $G$ with  $\ip{x_k}{x_i}=a$, except $k$ (since $\ip{x_k}{x_l}$).  There are $N_a-1$ such indices.
We next observe that the union of the (disjoint)
sets $I_{\alpha, \beta}$ $\alpha, \beta \in \{a, b\}$ gives all the indices in row $k$ of $G$ 
except the diagonal entry. Therefore, we obtain 
  \begin{equation}\label{eq:aabb}
  N-1=|I_{a,a}|+2|I_{a,b}|+|I_{b,b}|.
  \end{equation}
Recall that we have $N_b+N_a=N-1.$ Taking this together with \eqref{eq:ab} and \eqref{eq:aabb} and performing
simplifications, we obtain
%where $N_a$ is defined before Theorem \ref{thm:na}, it follows from some straightforward calculations that 
%  $$
%  \{1,\dots,N\}=\{k,l\}\cup\bigcup_{\alpha,\beta\in\{a,b\}}I_{\alpha,\beta}.
%  $$ Thus we can compute the cardinality of $I_{b,b}$ as 
%  implying that
  \begin{align*}
 |I_{b,b}|=N_b-N_a+C_a+1.
  \end{align*}
%where $N_a$ is defined before Theorem \ref{thm:na} and we have used the fact that $N_b=N-1-N_a.$ 
We can then rewrite~\eqref{eq:Nn} as 
  \begin{align*}
    \frac N n a &= 2(N_a-C_a-1)ab+C_a a^2 + (N_b-N_a+C_a+1)b^2 \\
   &= 2(N_a-1) ab + (N_b-N_a+1)b^2 + C_a(a-b)^2.
  \end{align*}
Since $a\neq b$, there is a unique $C_a$ that satisfies this equality. In other words, any pair of connected vertices of the associated graph $\Gamma_1$ has the same number $C_a$ of common neighbors. Similarly, any two non-connected vertices of $\Gamma_1$ have the same number $C_b$ of common neighbors. Therefore, $\Gamma_1$ is a strongly regular graph.
\end{proof}
We now set out to describe all two-distance tight frames. Propositions \ref{prop:tf-design} and \ref{prop:2-tight-srg} 
imply that we just need to find all spherical two-distance embeddings of strongly regular graphs and check the
$2$-design conditions for them.

\vspace*{.1in}\noindent {\em Spherical embeddings of SRGs.} Let $\Gamma_1$ be an $\text{SRG}(v,k,\lambda,\mu)$ which is not a complete or empty graph  and let $\Phi_1$ be its
adjacency matrix. As already mentioned, the set of vertices of $\Gamma_1$
can be embedded in the sphere by projecting the vectors of the standard basis of the
space $\reals^v$ on the eigenspaces
of $\Phi_1.$ 

The spectral structure of the matrix $\Phi_1$ is as follows.
It has three mutually orthogonal eigenspaces that correspond to three eigenvalues:
%(\textcolor{red}{they may be not distinct for Tur\'{a}n graphs and their complements}) and subsequently 
the all-one vector $\mathbf{1}$ 
with eigenvalue $k$, an eigenspace ${E_1}$ of dimension $n_1$ with eigenvalue $r_1$, and an eigenspace 
${E_2}$ of dimension $n_2$ with eigenvalue $r_2$ \cite[p.117]{Brouwer12}. Note that for Tur{\'a}n graphs and
their complements it is possible that $r_1=k.$
The values of $n_1, r_1, n_2, r_2$ can be found explicitly via the parameters $(v,k,\lambda,\mu).$ Since these values
are useful in constructing examples, we quote the expressions for them from \cite[pp.219-220]{Godsil2001}:
  \begin{align*}
  r_{1,2}&=\frac12(\lambda-\mu\pm\sqrt{(\lambda-\mu)^2+4(k-\mu)}\,)\\
  n_{1,2}&=\frac12\Big(v-1\mp\frac{2k+(v-1)(\lambda-\mu)}{\sqrt{(\lambda-\mu)^2+4(k-\mu)}}\Big).
  \end{align*}

Geometrically the Delsarte-Goethals-Seidel construction amounts to projecting orthogonally the standard basis vectors of $\reals^v\! $ on an eigenspace, for instance ${E_1},$
 and normalizing the projections (they all have the same length) to obtain unit lengths. 
Denote the obtained spherical set by  $S_1=S_1(\Gamma_1)$ and denote its two inner products by $a_1$ and $b_1,$ so that
$\Gamma_1$ is the graph of inner products $a_1, b_1;$ cf. \eqref{eq:F}.
  It is easy to show \cite{del77b} that this spherical set supports an $n_1$-dimensional $2$-design.
% (\textcolor{red}{for Tur\'{a}n graphs and their complements some points after projection will coincide}). 

 Similarly we can obtain an $n_2$-dimensional $2$-design 
 $S_2=S_2(\Gamma_1)$ with inner products $a_2$ and $b_2$ by projecting $\reals^N$ 
 on ${E_2}$ and normalizing the projections. Finally, let $S_0$ denote the trivial one-dimensional embedding and
 note that $a_0=b_0=1$.

Let $\Gamma_2$ be the complement graph of $\Gamma_1$ and let $\Phi_2$ be its adjacency matrix.
We have
  $$
  \Phi_2=J-I-\Phi_1.
  $$
The vector $\bf1$ is an eigenvector of each of these matrices, and any vector $z$ such that $\langle z,{\bf1}\rangle=\sum z_i=0$ is
an eigenvector of $J$ and $I$. Hence if such vector $z$ is an eigenvector of $\Phi_1$, it is also an eigenvector of $\Phi_2.$
Thus, the matrices $\Phi_1$ and $\Phi_2$ share the same spectral structure. In particular, $\Phi_2$  also has three eigenvalues and three eigenspaces that coincide with the eigenspaces of $\Phi_1$: a vector of all ones $\mathbf{1}$ 
with eigenvalue $v-1-k$, an eigenspace ${E_1}$ of dimension $n_1$ with eigenvalue $s_1$, an eigenspace 
${E_2}$ of dimension $n_2$ with eigenvalue $s_2$.

\begin{proposition}\label{prop:srg-embed} Let $\Gamma_1(N,k,\lambda,\mu)$ be a strongly regular graph that is not complete or empty.
For any two-distance spherical embedding $S=\{x_1, \ldots,x_N\}$ of $\Gamma_1,$ 
there are three nonnegative real numbers $\alpha, \beta, \gamma$, $\alpha^2+\beta^2+\gamma^2=1$
such that for all $i=1,\dots,N$
   \begin{equation}\label{eq:a}
   x_i=\alpha x_i(0) + \beta x_i(1) + \gamma x_i(2)
   \end{equation}
for some $x_i(j), j=0,1,2.$ The sets $S_j(\Gamma_1)=\{x_i(j): 1\le i\le N\}, j=0,1,2$ form the Delsarte-Goethals-Seidel spherical embeddings of $\Gamma_1$ and are
contained in mutually orthogonal unit spheres of dimensions 
$1$, $n_1,$ and $n_2,$ respectively.
\end{proposition}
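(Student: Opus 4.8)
The plan is to recognize the Gram matrix $G$ of $S$ as an element of the Bose-Mesner algebra of $\Gamma_1$, to diagonalize it there with nonnegative eigenvalues, and then to read off the decomposition coefficient by coefficient. Since $\Phi_2=J-I-\Phi_1$, the Gram matrix $G=I+a\Phi_1+b\Phi_2$ is a real linear combination of the commuting symmetric matrices $I,\Phi_1,J$. Writing $P_0,P_1,P_2$ for the orthogonal projections of $\reals^N$ onto $\langle\mathbf 1\rangle$, $E_1$, $E_2$ respectively, we have $I=P_0+P_1+P_2$, $J=NP_0$, and $\Phi_1=kP_0+r_1P_1+r_2P_2$; because $r_1\neq r_2$ (which holds for any SRG that is neither complete nor empty), $\Span\{I,\Phi_1,J\}=\Span\{P_0,P_1,P_2\}$. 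Hence $G=\mu_0P_0+\mu_1P_1+\mu_2P_2$ for some scalars $\mu_j$, and since $G$ is a Gram matrix it is positive semidefinite, so $\mu_0,\mu_1,\mu_2\ge 0$. This automatically covers the degenerate case $r_1=k$ (Tur{\'a}n graphs and their complements), since we never split $\reals^N$ according to the eigenvalues of $\Phi_1$, only according to the fixed orthogonal decomposition $\reals^N=\langle\mathbf 1\rangle\oplus E_1\oplus E_2$.

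Next I would record that each $P_j$ has constant diagonal: lying in $\Span\{I,\Phi_1,J\}$, all of whose generators have constant diagonal ($1,0,1$), it has constant diagonal entries, necessarily equal to $\operatorname{tr}(P_j)/N=d_j/N$ where $d_0=1$, $d_1=n_1$, $d_2=n_2$. Therefore $\tfrac{N}{d_j}P_j$ is positive semidefinite of rank $d_j$ with all diagonal entries equal to $1$, so it is the Gram matrix of $N$ unit vectors spanning a $d_j$-dimensional space; concretely these are the vectors $x_i(j):=\sqrt{N/d_j}\,P_je_i$, the normalized orthogonal projections of the standard basis vectors $e_1,\dots,e_N$ of $\reals^N$ onto $E_j$, which is exactly the Delsarte-Goethals-Seidel embedding $S_j(\Gamma_1)$; and the spaces $\langle\mathbf 1\rangle$, $E_1$, $E_2$ are mutually orthogonal of dimensions $1,n_1,n_2$.

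Finally, set $\alpha:=\sqrt{\mu_0/N}$, $\beta:=\sqrt{\mu_1 n_1/N}$, $\gamma:=\sqrt{\mu_2 n_2/N}$ (all real and nonnegative since $\mu_j\ge 0$) and form $y_i:=\alpha x_i(0)\oplus\beta x_i(1)\oplus\gamma x_i(2)$ in the orthogonal direct sum of the three ambient spaces. Using $\langle x_i(j),x_l(j)\rangle=\tfrac{N}{d_j}(P_j)_{il}$ and orthogonality of the blocks, one gets $\langle y_i,y_l\rangle=\alpha^2N(P_0)_{il}+\beta^2\tfrac{N}{n_1}(P_1)_{il}+\gamma^2\tfrac{N}{n_2}(P_2)_{il}=\mu_0(P_0)_{il}+\mu_1(P_1)_{il}+\mu_2(P_2)_{il}=G_{il}$, so $\{y_i\}$ has the same Gram matrix as $S$ and hence differs from $S$ by an orthogonal transformation; identifying the two yields \eqref{eq:a}. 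The normalization is then immediate: $\alpha^2+\beta^2+\gamma^2=\tfrac1N(\mu_0+n_1\mu_1+n_2\mu_2)=\tfrac1N\operatorname{tr}(G)=1$ because every $G_{ii}=1$, which also confirms $\|y_i\|=1$. I expect the only steps needing genuine (if mild) care to be the identity $\Span\{I,\Phi_1,J\}=\Span\{P_0,P_1,P_2\}$ together with the constant-diagonal property of the $P_j$ when $\Phi_1$ has a repeated eigenvalue; the rest is bookkeeping. One can also observe that, $P_0,P_1,P_2$ being linearly independent, matching Gram matrices forces $\alpha^2N=\mu_0$, $\beta^2N/n_1=\mu_1$, $\gamma^2N/n_2=\mu_2$, so the triple $(\alpha,\beta,\gamma)$ is uniquely determined by $S$ --- a fact that will be useful when enumerating the embeddings.
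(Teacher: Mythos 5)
Your argument is correct, and it reaches the conclusion by a genuinely different route than the paper. The paper works in the two-dimensional parameter space of inner products $(a,b)$: it writes the feasibility conditions $G\succeq 0$ as three linear inequalities, argues that the feasible region is a nondegenerate triangle whose vertices are the inner-product pairs of $S_0$, $S_1$, $S_2$, and then obtains $\alpha^2,\beta^2,\gamma^2$ as barycentric coordinates of $(a,b)$ in that triangle. You instead diagonalize $G$ directly in the Bose--Mesner algebra, writing $G=\mu_0P_0+\mu_1P_1+\mu_2P_2$ with $\mu_j\ge 0$ and identifying $\tfrac{N}{d_j}P_j$ as the Gram matrix of $S_j(\Gamma_1)$; the coefficients $\alpha,\beta,\gamma$ then drop out of matching Gram matrices, and the normalization $\alpha^2+\beta^2+\gamma^2=1$ is just $\operatorname{tr}(G)=N$. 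The two arguments rest on the same underlying fact (the spectral decomposition of $G$ over the common eigenspaces of $\Phi_1$ and $\Phi_2$), but your version is more direct: it avoids the paper's slightly delicate step of showing the feasible region is a genuine triangle (which the paper justifies by exhibiting two distinct embeddings), it handles the degenerate case $r_1=k$ transparently since you only use the fixed decomposition $\langle\mathbf 1\rangle\oplus E_1\oplus E_2$, and it gives uniqueness of $(\alpha,\beta,\gamma)$ for free from the linear independence of $P_0,P_1,P_2$. What the paper's approach buys in exchange is an explicit geometric picture of the moduli space of embeddings as a triangle in the $(a,b)$-plane, which it then reuses when enumerating the $2$-designs. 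Your one substantive auxiliary claim --- that $r_1\ne r_2$ for an SRG that is neither complete nor empty, so that $\Span\{I,\Phi_1,J\}=\Span\{P_0,P_1,P_2\}$ --- is true (equality would force $\lambda=\mu=k$, impossible since $\lambda\le k-1$ for a nonempty graph), but you should state the verification rather than assert it parenthetically.
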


\begin{proof}
Let $S$ be a two-distance spherical embedding of $\Gamma_1$ with 
distances $a$ and $b$. Write the Gram matrix $G$ of $S$ as in \eqref{eq:F}.
The embedding $S$ exists if and only if $G$ is positive semidefinite. 
Since the matrices $\Phi_1$ and $\Phi_2$ share the spectral structure, 
we can find all eigenvalues of $G$ and check their non-negativity. This results in the following inequalities:
\begin{equation}\label{eq:sp}  \begin{aligned}
1+a k + b(N-1-k) &\geq 0\\
1+a r_1 + b s_1 &\geq 0\\
1+ a r_2 + b s_2 &\geq 0,
   \end{aligned}
   \end{equation}
(some of these inequalities may trivialize to $1\ge 0$). 
The set of all feasible pairs $(a,b)$ is the intersection of at most three half-planes in the plane. Note that this set
must belong to the square $[-1,1]^2,$ so it is bounded. Moreover, $G\succeq 0$ if and only if the inequalities \eqref{eq:sp}
hold true, so this region is either a triangle or a single point. 
Since there are always at least two different embeddings, namely $S_0$ and the $(N-1)$-dimensional regular simplex, this set must be a triangle whose vertices are the intersections of any two of the three lines defining the inequalities. 

Next we note that these intersection points precisely represent $S_0$, $S_1$, and $S_2$ so they are $(a_0, b_0)$, $(a_1, b_1)$, and $(a_2, b_2)$.
Indeed, project the basis orthogonally on one of the spaces ${\bf1},E_1,E_2$ and denote the (normalized)
resulting set by $X.$ The eigenvectors of this projection, corresponding to the two other spaces have zero eigenvalues. 
Subsequently,  the eigenvalues of these vectors for the Gram matrix $G=X^t X$ are also zero, 
which turns two of the inequalities in \eqref{eq:sp} into equalities.

Any other pair $(a,b)$ can be represented as $(a,b) = \alpha^2 (a_0, b_0) + \beta^2 (a_1, b_1) + \gamma^2 (a_2, b_2)$, where $\alpha^2+
\beta^2+\gamma^2=1$ and $\alpha, \beta, \gamma$ are non-negative. Now note that the set 
 $\{x_i: 1\leq i \leq N\}$ such that $x_i=\alpha x_i(0) + \beta x_i(1) + \gamma x_i(2)$, where the set of all vectors $x_i(0)$ forms $S_0$, 
 the set of all $x_i(1)$ forms $S_1$, and the set of all $x_i(2)$ forms $S_2$ in mutually orthogonal unit spheres, 
 gives a two-distance spherical embedding of $\Gamma_1$ with inner products $a$ and $b.$ Moreover, any such embedding is completely
 determined by its Gram matrix, and therefore, this gives a description of all spherical two-distance embeddings of $\Gamma_1$. This completes the
 proof.
\end{proof}

Proposition \ref{prop:srg-embed} entails the following description of two-distance $2$-designs.

\begin{theorem}\label{thm:2-design}
Any spherical two-distance $2$-design $S=\{x_1,\ldots,x_N\}$ with graph $\Gamma_1$ for one of the distances is either $S_1(\Gamma_1)$ or $S_2(\Gamma_1)$, or a regular $(N-1)$-dimensional simplex.
\end{theorem}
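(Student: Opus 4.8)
The plan is to obtain the statement from the structural description of all two-distance spherical embeddings of a strongly regular graph (Proposition~\ref{prop:srg-embed}), cutting the one-parameter family it produces down to three points by imposing the two features that distinguish a $2$-design from a general two-distance FUNTF: the center of mass is at the origin, and the Gram matrix has a single nonzero eigenvalue. I would begin by clearing away the degenerate cases. If only one inner-product value $c$ is realized by pairs of $S$, then $0=\|\sum_i x_i\|^2=N+N(N-1)c$ forces $c=-1/(N-1)$, so $S$ is a regular $(N-1)$-dimensional simplex; this also covers $\Gamma_1$ being complete or empty. In the case $a=-b$ the frame $S$ is equiangular, and the argument runs in parallel once the vanishing of the center of mass together with the integrality of the Seidel row sums is used to identify the underlying two-graph with a strongly regular graph (this is the classical equiangular/two-graph correspondence, in the frame-theoretic form of \cite{wal09}). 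So I may assume $a\ne\pm b$, hence $a^2-b^2\ne 0$: Proposition~\ref{prop:2-tight-srg} makes $\Gamma_1$ strongly regular and neither complete nor empty, and Proposition~\ref{prop:srg-embed} provides $\alpha,\beta,\gamma\ge 0$ with $\alpha^2+\beta^2+\gamma^2=1$ and $x_i=\alpha x_i(0)+\beta x_i(1)+\gamma x_i(2)$, where $S_0,S_1,S_2$ are the Delsarte--Goethals--Seidel embeddings supported on the mutually orthogonal subspaces of $\reals^N$ spanned by $\mathbf{1}$, $E_1$, $E_2$, of dimensions $1$, $n_1$, $n_2$.

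Next I would feed in $\sum_i x_i=0$. Since $E_1,E_2\perp\mathbf{1}$ and the $x_i(1)$, $x_i(2)$ are rescaled orthogonal projections of the standard basis of $\reals^N$ onto $E_1$, $E_2$, summing over $i$ gives $\sum_i x_i(1)=\sum_i x_i(2)=0$, whereas $a_0=b_0=1$ forces all $x_i(0)$ to be one and the same unit vector, so $\sum_i x_i(0)=N x_i(0)\ne 0$. Hence $0=\sum_i x_i=\alpha N x_i(0)$ gives $\alpha=0$, i.e. $S\subset E_1\oplus E_2\cong\reals^{N-1}$. Writing $P_j$ for the orthogonal projection of $\reals^N$ onto $E_j$, strong regularity makes every diagonal entry of $P_j$ equal to $n_j/N$, so the normalization used in the Delsarte--Goethals--Seidel construction gives $G_j=(N/n_j)P_j$ for the Gram matrix of $S_j$; since $E_1\perp E_2$, the Gram matrix of $S$ is $G=\beta^2(N/n_1)P_1+\gamma^2(N/n_2)P_2$.

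To finish, I would invoke that the Gram matrix of a FUNTF has a single nonzero eigenvalue. The eigenvalues of $G$ are $0$ (on $\mathbf{1}$), $\beta^2 N/n_1$ (on $E_1$), and $\gamma^2 N/n_2$ (on $E_2$). If $\gamma=0$ then $\beta=1$ and $S=S_1(\Gamma_1)$; if $\beta=0$ then $S=S_2(\Gamma_1)$; otherwise both are nonzero, hence equal, $\beta^2 N/n_1=\gamma^2 N/n_2$, which together with $\beta^2+\gamma^2=1$ and $n_1+n_2=N-1$ forces $\beta^2=n_1/(N-1)$, $\gamma^2=n_2/(N-1)$, whence \[ G=\tfrac{N}{N-1}(P_1+P_2)=\tfrac{N}{N-1}\Bigl(I-\tfrac{1}{N} J\Bigr), \] the Gram matrix of the regular $(N-1)$-dimensional simplex. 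This exhausts the possibilities.

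I expect the points needing genuine care to be the bookkeeping of the degenerate boundary cases — in particular the equiangular case $a=-b$, which Proposition~\ref{prop:2-tight-srg} does not reach directly since it assumes $a^2\ne b^2$ — and the identity $G_j=(N/n_j)P_j$, whose only content is that a strongly regular graph has constant-diagonal powers of its adjacency matrix. Granting these, the classification reduces to the short spectral computation above.
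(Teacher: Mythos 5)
Your proof is correct and follows the same skeleton as the paper's: both start from the decomposition $x_i=\alpha x_i(0)+\beta x_i(1)+\gamma x_i(2)$ of Proposition~\ref{prop:srg-embed}, use $\sum_i x_i=0$ to force $\alpha=0$, and then pin down $\beta,\gamma$ in the mixed case. Where you diverge is the final step. The paper tests the tight-frame identity \eqref{tight-frame-char} against an arbitrary $x=x(1)+x(2)$, which produces a cross term $\sum_i\langle x(1),x_i(1)\rangle\langle x(2),x_i(2)\rangle$ whose vanishing needs a separate argument (the two vectors of inner products lie in the orthogonal eigenspaces $E_1$ and $E_2$); you instead compute the Gram matrix directly as $G=\beta^2(N/n_1)P_1+\gamma^2(N/n_2)P_2$ and read off its three eigenvalues, so that the single-nonzero-eigenvalue property of a FUNTF Gram matrix immediately gives $\beta^2/n_1=\gamma^2/n_2$ and the explicit simplex Gram matrix $\tfrac{N}{N-1}\bigl(I-\tfrac1N J\bigr)$. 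Your route avoids the cross-term issue entirely and turns the identification of the third design as a regular simplex into a computation rather than a dimension count; its only extra input is the (standard, and correctly justified) fact that the primitive idempotents of a strongly regular graph have constant diagonal $n_j/N$. You are also more careful than the paper about boundary cases: the single-distance case, and more importantly the equiangular case $a=-b$, which the paper's proof silently excludes since Proposition~\ref{prop:2-tight-srg} assumes $a^2\ne b^2$. Your sketch for that case via two-graphs would work, but a shorter patch is available: for a $2$-design $G\cdot\mathbf{1}=0$ forces $N_{a,i}$ to be independent of $i$ even when $a=-b$, and the counting in the proof of Proposition~\ref{prop:2-tight-srg} then only needs $(a-b)^2\ne0$, so $\Gamma_1$ is strongly regular in that case as well.
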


\begin{proof} We begin with the representation of the vectors $x_i$ given by \eqref{eq:a}.
Note that since $\sum_{i=1}^N x_i = 0$, the coefficient $\alpha$ must be 0.
If one of $\beta$ or $\gamma$ is $0$, then $S$ is either $S_1$ or $S_2$. 
The remaining case is when they are both positive. 
In this case the set $S$ is $(n_1+n_2)$-dimensional, so it must satisfy the tight-frame condition \eqref{eq:frame-bounds}-\eqref{eq:A} for any $x\in \reals^{n_1+n_2}$: 
   \begin{equation}\label{eq:12}
\frac N {n_1+n_2} ||x||^2 = \sum\limits_{i=1}^N \langle x, x_i \rangle ^2.
   \end{equation}
    Now we express $x$ as the sum of $x(1)$ and $x(2)$, where $x(1)$ belongs to the space $\reals^{n_1}$ that contains all the vectors 
    $x_i(1),$ and $x(2)$ belongs to the space $\reals^{n_2}$ containing all $x_i(2)$. 
    Since $S_1$ and $S_2$ form $2$-designs, 
    they must satisfy the tight-frame condition, namely
    $$
    \frac N {n_j} ||x(j)||^2 = \sum\limits_{i=1}^N \langle x(j), x_i(j) \rangle ^2, \quad j=1,2.
    $$ 
Using~\eqref{eq:a} and~\eqref{eq:12}, we obtain:
    \begin{align*}
    \frac N {n_1+n_2} (||x(1)||^2 + ||x(2)||^2) &= \sum\limits_{i=1}^N (\beta \langle x(1),x_i(1)\rangle + \gamma \langle x(2),x_i(2)\rangle)^2 \\
    &= \beta^2 \frac N {n_1} ||x(1)||^2 + \gamma^2 \frac N {n_2} ||x(2)||^2 +2\beta\gamma \sum\limits_{i=1}^N \langle x(1),x_i(1)\rangle \langle x(2),x_i(2)\rangle.
    \end{align*}
This equality must hold for any $x(1)$ and $x(2),$ so $\beta^2=\frac {n_1} {n_1+n_2}$ and 
$\gamma^2 = \frac {n_2} {n_1+n_2}$. To show that with these values of $\beta$ and $\gamma$ the set $S$
forms a $2$-design we just need to explain why $\sum\limits_{i=1}^N \langle x(1),x_i(1)\rangle \langle x(2),x_i(2)\rangle$ is always $0$. 
Refer to the definition of $S_1$ and $S_2$ and let their ambient spaces be ${E_1}$ and ${E_2}$. 
Then the vector with components $\langle x(1),x_i(1)\rangle$ is just $\Phi_1 x(1)$ times a normalizing coefficient, 
and the vector with components $\langle x(2),x_i(2)\rangle$ is $\Phi_1 x(2)$ with its normalizing coefficient. 
The first vector belongs to ${E_1}$ and the second vector belongs to ${E_2}$ so they must be orthogonal.

A regular $(N-1)$-dimensional simplex is obviously a $2$-design and can be considered as a two-distance embedding of 
$\Gamma_1$ with equal distances. 
Since $S_1$ and $S_2$ are not $(N-1)$-dimensional, the 
third $2$-design that we constructed must be a regular simplex (recall that $n_1+n_2=N-1$). 
This observation finishes the proof of the theorem.
\end{proof}

\begin{remark}
The regular simplex can be constructed similarly to $S_1$ and $S_2$: it is obtained by finding orthogonal projections of (the basis vectors of) $\reals^N$ on  $E_1\cup E_2$ and normalizing to get unit lengths.
Another simplex is given by the orthonormal basis itself which represents a trivial projection.
\end{remark}

\begin{proof} {\em Proof of Theorem \ref{thm:main}.} We now recap the arguments that lead to the classification of all non-equiangular two-distance tight frames in Theorem \ref{thm:main}. Let $S$  be such a frame and assume that  $a$ and $ b$ are the two distinct inner products of the vectors in $S$. Then, $a^2-b^2\neq 0$. First, by Proposition~\ref{prop:tf-design}, $S$ is  either a $n$ dimensional spherical $2-$design or similar to $(n-1)$ dimensional spherical $2-$design. On account of Prop.~\ref{prop:2-tight-srg} the graphs defined by the Gram matrix of a non-equiangular two-distance FUNTF are strongly regular, so we need to describe all spherical two-distance embeddings of SRGs and check if they satisfy the design condition. We show in Theorem~\ref{thm:2-design}  that all such embeddings are of the Delsarte-Goethals-Seidel type, and yield spherical two-distance 2-designs. Since each such design gives rise to two FUNTFs, this completes the classification.

%In either cases, it then follows from Theorem \ref{thm:2-design} that $S$ can be obtained as a spherical embedding of a strongly regular graph. 
%
% First it is shown (Prop.~\ref{prop:tf-design}) that
%a two-distance FUNTF is either a spherical 2-design or is similar to a spherical 2-design.
%Since the graphs defined by the Gram 
%matrix of a non-equiangular two-distance FUNTF are strongly regular, we need to describe all spherical 
%two-distance embeddings of SRGs and check if they satisfy the design condition. 
%We show that all such embeddings of the Delsarte-Goethals-Seidel type, and 
%give spherical two-distance 2-designs. Since each such design gives rise to two FUNTFs, this completes 
%the classification.

\end{proof}

%\section{Some examples}
\vspace*{.1in} The results established above enable us to construct large classes of two-distance tight frames. For brevity we
write $\text{FUNTF}(n,N,N_a,a,b)$ to refer to a two-distance tight frame in $n$ dimensions, with $N$ points, inner products $b<a,$ and
with $N_a$ entries $a$ in each row of $G$.
We give a few examples of 2-distance frames derived from the table of strongly regular graphs in \cite[pp.143ff]{Brouwer12}. Many more
examples can be easily obtained using the described recipe.

\vspace*{.2in}\begin{center}{\small\begin{tabular}{l||l@{\hspace*{.15in}}|@{\hspace*{.15in}}ll}
$\text{SRG}(N,k,\lambda,\mu)$&  2-design FUNTF$(n,N,N_a,a,b)$ & shifted 2-design FUNTF$(n,N,N_a,a,b)$\\
\hline
 $(10,6,3,4)$ &$(4,10,6,\nicefrac16,-\nicefrac23),\; (5,10,3,\nicefrac13,-\nicefrac13)$ 
               &  $(5,10,6,\nicefrac13,-\nicefrac13),\; (6,10,3,\nicefrac49,-\nicefrac19)$ \\[.03in] 
$(15,8,4,4)$ &$(5,15,8,\nicefrac14,-\nicefrac12),\; (9,15,8,\nicefrac16,-\nicefrac14)$ 
               & $(6,15,8,\nicefrac38,-\nicefrac14),\; (10,15,6,\nicefrac14,-\nicefrac18)$ \\ [.03in] 
$(16,10,6,6)$ &$(5,16,10,\nicefrac15,-\nicefrac35),\; (10,16,5,\nicefrac15,-\nicefrac15)$ 
               &  $(6,16,10,\nicefrac13,-\nicefrac13),\;(11,16,5,\nicefrac3{11},-\nicefrac1{11})$\\
               
\end{tabular}
}
\end{center}

\vspace*{.2in}


\begin{thebibliography}{99}
%\bibitem{ban04}
%E. Bannai, A. Munemasa, and B. Venkov, \emph{The nonexistence of
%certain tight spherical designs,} St. Petersburg Math. J.
%\textbf{16} (2005), 609--625.

\bibitem{ban05}
E. Bannai and Et. Bannai, \emph{A note on the spherical embeddings of strongly regular graphs},
European J. Comb., \textbf{26} (2005), 1177--1179.

\bibitem{ban14}
E. Bannai, T. Okuda, and M. Tagami, \emph{Spherical designs of harmonic index $t$},
J. Approximation Theory, in press; Preprint available arXiv:1306:5101.

\bibitem{barg13}
A. Barg and W.-H. Yu, \emph{New bounds for spherical two-distance
sets}, Experimental Mathematics, \textbf{22}, no. 2, (2013),
187--194.
\bibitem{barg14}
A. Barg and W.-H. Yu, \emph{New bounds on equiangular lines}, in: {Discrete Geometry and Algebraic Combinatorics,} 
A. Barg abd O. Musin, eds., (Contemporary Mathematics, vol.~{625}), Amer. Math. Soc., Providence, RI, 2014, pp.~111--121.
%, e-print arXiv:1311.3219.

\bibitem{ben03}
J. J. Benedetto and M. Fickus, \emph{Finite normalized tight
frames}, Advances in Computational Math, \textbf{Vol 18},
nos.~2-4, (2003), 357--385.

\bibitem{bod07}
B. Bodmann, P. Casazza, and R. Balan, \emph{Frames for linear
reconstruction without phase}, Proc. 42nd IEEE Annual Conference
on Information Sciences and Systems (CISS 2008), Princeton, NJ,
March 19-21, 2008, pp. 721--726.

%\bibitem{bro07}
%A. E. Brouwer, \emph{Strongly regular graphs}, in "Handbook of
%Combinatorial designs", Chapman and Hall, Boca Raton 2007.

\bibitem{bro12}
A. E. Brouwer and W. H. Haemers, \emph{Spectra of graphs}, Springer, New York e.a., (2012).

\bibitem{ck12}
P.~G.~Casazza and G.~Kutyniok (Editors), ``Finite Frame Theory,'' 
Birkh\"auser, Boston (2012).

\bibitem{Brouwer12} A. E. Brouwer and W. H. Haemers, {Spectra of graphs,} Springer, New York e.a., 2012.

%\bibitem{BT} A.~E.~Brouwer, ``Parameters of strongly regular graphs,''
%http://www.win.tue.nl/~aeb/graphs/srg/srgtab.htm. (accessed on
%01/29/2014).

%\bibitem{cox40} H.S.M. Coxeter, ``The polytope $2_21,$ whose twenty-seven vertices
%correspond to the lines on the general cubic space,'' Amer. J. Math., \textbf{61}, no. 1 (1940),
%457--486.

%\bibitem{cas13} P. Casazza, D. Redmond, and J.C. Tremain, ``Real equiangular frames,'' online preprint available at
%http://www.math.missouri.edu/~pete/ (retrieved on 01/29/2014).


\bibitem{del77b}
P.~Delsarte, J.~M. Goethals, and J.~J. Seidel, \emph{Spherical
codes and designs}, Geometriae Dedicata \textbf{6} (1977),
363--388.

\bibitem{Godsil2001} C. Godsil and G. Royle,  Algebraic Graph Theory, Springer, New York 2001.

\bibitem{hklw}
D. Han, K. Kornelson, D. Larson,  and E. Weber, \textit{Frames for
Undergraduates}, American Mathematical Society, Providence, RI,
2007.


\bibitem{hol04}
R. B. Holmes and V.I. Paulsen, \emph{Optimal frames for erasures},
Linear Alg. and Application, \textbf{377} (2004), 31-51.

%\bibitem{hub75}
%X.~L. Hubaut, \emph{Strongly regular graphs}, Discrete Math. \textbf{13} (1975), 357--381.

\bibitem{koche1}
J.~Kova\v{c}evi\'{c} and A.~Chebira, {\it Life Beyond Bases: The
Advent of Frames (Part I-II),} Signal Processing Magazine, IEEE,
Volume {\bf 24}  (2007), 86--104 and 115--125.

%\bibitem{koche2}
%J.~Kova\v{c}evi\'{c} and A.~Chebira, {\it Life Beyond Bases: The
%Advent of Frames (Part II),}
% Signal Processing Magazine, IEEE, Volume { \bf 24}, Issue 5,  Sept. 2007, 115--125.

\bibitem{lar77}
D.~G. Larman, C.~A. Rogers, and J.~J. Seidel, \emph{On
two-distance sets in
  {E}uclidean space}, Bull. London Math. Soc. \textbf{9} (1977), 261--267.

\bibitem{lem73}
P. W. H. Lemmens and J.~J. Seidel, \emph{Equiangular lines},
Journal of Algebra \textbf{24} (1973), 494--512.

\bibitem{lint66}
J. H. van Lint and J.J. Seidel, \emph{Equiangular point sets in
elliptic geometry} Proc. Nedert. Akad. Wetensh. Series \textbf{69}
(1966), 335-348.


%\bibitem{lis97}
%P. Lison\v{e}k, \emph{New maximal two-disancse sets} J. Comb
%Theory, Ser. A, (1997), 318-338.

\bibitem{mus09a}
O. R. Musin, \emph{Spherical two-distance sets}, J. Combin. Theory
Ser. A \textbf{116}, no.~4 (2009), 988--995.


%\bibitem{neb12}
%G. Nebe and B. Venkov, \emph{On tight spherical designs},
%arXiv:1201.1830, 2012, 8pp.

\bibitem{neu81}
A. Neumaier, \emph{Distance matrices, dimension, and conference
graphs}, Indag. Math., \textbf{43}, no.~4 (1981), 385--391.

\bibitem{noz12} H. Nozaki and M. Shinohara, \emph{A geometrical characterization
of strongly regular graphs}, Linear Alg. Appl. \textbf{437} (2012), 2587--2600.

\bibitem{Seidel73}
J.~J. Seidel, \emph{A survey of two-graphs}, Colloquio
Internazionale sulle Teorie Combinatorie (Rome, 1973), Tomo I, pp.
481--511. Atti dei Convegni Lincei, No. 17, Accad. Naz. Lincei,
Rome, 1976.

\bibitem{sus07}
M. A. Sustik, J. A. Tropp, I. S. Dhillon and R.W. Heath, Jr.,
\emph{On the existence of equiangular tight frames}, Linear Alg.
and Applications \textbf{426}, no.~2-3 (2007),  619-635.

%\bibitem{sch42}
%I.J. Schoenberg, \emph{Positive definite functions on spheres},
%Duke Math. J.   \textbf{9} (1942), 96--107.

\bibitem{stro03}
T. Strohmer and R. W. Heath, \emph{Grassmannian frames with
applications to coding and communication}, Appl. Comp. Harmonic
Anal. \textbf{14}, no.~3 (2003), 619-635.

\bibitem{wal09}
S. Waldron, \emph{On the construction of equiangular frames from
graph}, Linear Alg. and its Applications \textbf{431}, no. 11
(2009), 2228-2242.


\end{thebibliography}
\end{document}